\documentclass{amsart}

\usepackage{tikz}
\usepackage[all]{xy}
\usepackage[usenames,dvipsnames]{pstricks}
\usepackage{epsfig}
\usepackage{pst-grad} % For gradients
\usepackage{pst-plot} % For axes
\usepackage{ebezier}
\usepackage{latexsym}\usepackage{amsfonts}\usepackage{amssymb}\usepackage{amsthm}
\usepackage{amsmath}\usepackage{newlfont}\usepackage{graphicx}\usepackage{doc}
\usepackage{mathrsfs}
\usepackage[
        colorlinks, citecolor=red,
        backref,
        pdfauthor={JK,DM,MP},
        pdftitle={C(F)}
]{hyperref}

\newtheorem{thrm}{Theorem}

\newtheorem{cor}[thrm]{Corollary}
\newtheorem{defn}[thrm]{Definition}
\newtheorem{rem}[thrm]{Remark}
\newtheorem{prop}[thrm]{Proposition}

\def \Dj{\mbox{\raise0.3ex\hbox{-}\kern-0.4em D}}

\begin{document}

\title{Complexity function and entropy of induced maps on hyperspaces of continua}

\author{Jelena Kati\'c}
\address{Matemati\v{c}ki fakultet, Univerzitet u Beogradu}
\email{jelena.katic@matf.bg.ac.rs}

\author{Darko Milinkovi\'c}
\address{Matemati\v{c}ki fakultet, Univerzitet u Beogradu}
\email{darko.milinkovic@matf.bg.ac.rs}

\author{Milan Peri\'c}
\address{Matemati\v{c}ki fakultet, Univerzitet u Beogradu}
\email{milan.peric@matf.bg.ac.rs}

\thanks{The authors are partially supported by the Ministry of  Science, Technological Development and Innovation, Republic of Serbia, through the project 451-03-33/2026-03/200104.}

\maketitle

\begin{abstract} We use the complexity function of an invariant, not necessary closed, subset of a two-sided shift space to compute the polynomial entropy of the induced dynamics on the hyperspace of continua for certain one-dimensional dynamical systems. We also provide a simple criterion for $f$ that implies $C(f)$ has infinite topological entropy.

\end{abstract}
Keywords: topological entropy, polynomial entropy, hyperspace of continua, graphs

MS Classification: Primary 37B40, Secondary 	37B10, 54F16, 54F50

%%%%%%%%%%%%%%%%%%
%%%%%%%%%%%%%%%%%%		
%%%%%%%%%%%%%%%%%%

\section{Introduction}

Every continuous map on a compact metric space $X$ induces a continuous map $2^f$ (called the {\it induced map}) on the hyperspace $2^X$ of all nonempty closed subsets of $X$. If $X$ is connected, we consider the hyperspace $C(X)$ consisting of all nonempty closed and connected subsets of $X$, and the induced map $C(f)=2^f|_{C(X)}$.  A natural question arises: what are the possible relations between the given (individual) dynamics on $X$ and the induced (collective) dynamics on the hyperspace. Despite progress over the past few decades, this relationship remains mostly unexplored and continues to draw considerable attention. For instance,  it is known that certain dynamical properties of the system $(X,f)$ are preserved in the induced system $(2^X,2^f)$, such as Li-Yorke chaos (see~\cite{GKLOPP}) and positive topological entropy (see~\cite{LR}). On the other hand, some properties of $(2^X,2^f)$ also imply the same properties for $(X,f)$ - for example, transitivity (see~\cite{RF}). However, for some properties there is no implication in any direction. For example, neither Devaney chaos of $(X,f)$ implies Devaney chaos of $(2^X,2^f)$, nor does Devaney chaos of $(2^X,2^f)$ imply Devaney chaos of $(X,f)$ (see~\cite{GKLOPP}). Without attempting to be exhaustive, we mention just a few significant contributions in this area: Borsuk and Ulam~\cite{BU}, Bauer and Sigmund~\cite{BS}, Rom\'an-Flores~\cite{RF}, Banks~\cite{B}, Acosta, Illanes and M\'endez-Lango~\cite{AIM}.

The topological entropy $h(f)$ is a classical measure of the complexity of a dynamical system $(X,f)$, quantifying the average exponential growth in the number of distinguishable orbit segments. It is clear that the topological entropy of the induced system on a hyperspace is greater than or equal to that of $f$, since the latter is a factor of the former; see Subsection~\ref{subsec:entropy}. The topological entropy of the induced map has been studied by Bauer and Sigmund [7], Kwietniak and Oprocha~\cite{KO}, Lampart and Raith~\cite{LR}, Hern\'{a}ndez and M\'{e}ndez~\cite{HM}, Arbieto and Bohorquez~\cite{AB}, among others. It is known that if $f$ has positive topological entropy, then $2^f$ has infinite topological entropy~\cite{BS}. Regarding the topological entropy of $C(f)$, there are examples where $h(f)>0$ and $C(f)$ is both finite and infinite, see~\cite{KO} and~\cite{LR}. 

In the case when $h(f)=0$, Lampart and Raith~\cite{LR} proved that for any homeomorphism $f$ of the circle (for which $h(f)=0$ always holds), we have $h(C(f))=0$. They also proved the same result for homeomorphisms of intervals and graphs.  Arbieto and Bohorquez proved that a Morse-Smale diffeomorphism $f:M\to M$ satisfies $h(C(f))=\infty$ for $\dim M\ge 2$ and $h(C(f))=0$ for $\dim M=1$ (see~\cite{AB}).

It is known that the topological entropy is concentrated on the non-wandering set, i.e. $h(f)=h(f|_{{NW}(f)})$, see~\cite{K}. However, somewhat unexpectedly, the existence of a wandering point in a dynamical system on a manifold of dimension greater than $1$ implies that $h(C(f))=\infty$. This is the first result of the paper.

\begin{thrm}\label{thmr:infty}
Let $M$ be a topological manifold which is compact, connected and of dimension greater or equal than $2$ and $f:M\to M$ a homeomorphism that has a wandering point. Then $h(C(f))=\infty$.\qed
\end{thrm}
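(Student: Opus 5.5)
Since $x$ is wandering, there is an open neighbourhood $U$ of $x$ with $f^n(U)\cap U=\emptyset$ for all $n\ge 1$. Because $f$ is a homeomorphism, the sets $f^n(U)$, $n\in\mathbb Z$, are then pairwise disjoint. Shrinking $U$, I take it to be an open ball in a chart, homeomorphic to $\mathbb R^d$ with $d\ge 2$. The plan is to embed a full shift on $k$ symbols into a power of $C(f)$ for every $k$, which gives $h(C(f))\ge \log k$ for all $k$.

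Fix $k$. Inside $U$ I choose $k$ pairwise disjoint closed arcs $A_1,\dots,A_k$ and a continuum $K\subset U$, say an arc, whose intersection pattern with the $A_i$ can be varied. The key geometric input, and the only place where $\dim M\ge 2$ is used, is the following. Choose a point $y\notin\bigcup_{n\in\mathbb Z} f^n(\overline U)$, or at least a suitable base point, and connect it by paths to the translates $f^{n}(U)$. Concretely, for each word $w=(w_0,\dots,w_{N-1})\in\{1,\dots,k\}^N$ I build a continuum
\[
K_w=\gamma\cup\bigcup_{j=0}^{N-1} f^{-j}(\beta_{w_j})\in C(M).
\]
Here $\gamma$ is a fixed connecting arc, chosen so that the pieces $f^{-j}(\beta_i)\subset f^{-j}(U)$ attach to it, and $\beta_i\subset U$ is an arc that touches $A_i$ but not $A_l$ for $l\ne i$. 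Then $f^{j}(K_w)\cap U$ contains $\beta_{w_j}$, plus possibly pieces coming from $f^j(\gamma)$. In dimension $\ge2$ the arc $\gamma$ can be routed so that it meets each translate $f^{-j}(U)$ only along a "neck" attaching $f^{-j}(\beta_i)$, avoiding the $A_l$'s. In dimension 1 this is impossible, which is consistent with the Morse--Smale result of~\cite{AB}.

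With this construction, the sets $\{f^j(K_w)\}$ are distinguished at time $j$ by which $A_i$ they meet. Any two of the $k^N$ continua $K_w$ are therefore $(N,\varepsilon)$-separated for $C(f)$ in the Hausdorff metric, where $\varepsilon$ is the minimal distance between the $A_i$ measured relative to the $\beta$'s. It follows that $h(C(f))\ge \log k$, and letting $k\to\infty$ gives $h(C(f))=\infty$.

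The main obstacle is the existence of the connecting arc $\gamma$ with controlled intersections. It must link infinitely many (or $N$) pieces lying in different translates $f^{-j}(U)$. Moreover, its forward images $f^j(\gamma)$ must not enter $U$ near the wrong $A_l$, since $\gamma$ itself visits $f^{-j}(U)$ and therefore $f^j(\gamma)$ visits $U$. To handle this, I would make $\gamma$ enter each $f^{-j}(U)$ only through a thin tube attaching to a marked boundary point $p\in\partial U$, pulled back by $f^{-j}$. Inside $U$, the image $f^j(\gamma)$ would then consist only of the standard tube from $p$ to $\beta_{w_j}$, plus the pieces $f^{j}(f^{-m}(\cdot))$ for $m\ne j$, which lie in $f^{j-m}(U)$, disjoint from $U$. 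Connectedness of $M$ and $\dim\ge 2$ let the tubes outside $\bigcup_j f^{-j}(U)$ be chosen. If the translates accumulate badly, I would instead use $N$ consecutive translates only and rely on $M\setminus\bigcup_{j<N} f^{-j}(\overline{U'})$ being connected for a smaller ball $U'$. Complements of finitely many disjoint closed balls in a connected manifold of dimension $\ge2$ are connected.
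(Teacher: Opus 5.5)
Your argument is correct once the shrunken ball $U'$ becomes the main construction rather than a fallback, and it takes a genuinely different route from the paper.

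The paper builds no continua by hand. It codes $K\in 2^M$ by which orbit points $f^n(x^i)$ of $k$ fixed points $x^1,\dots,x^k\in U$ it contains, and shows that $\Sigma=\varphi(F_\omega)$ has complexity $2^{km}$. From its theorem on complexity functions of non-closed shift-invariant sets, plus a direct separation argument, it gets $h(2^f;F_\omega)\ge k\log 2$. It then passes to $C(f)$ through the endpoint map $K\mapsto E(K)$ from continua homeomorphic to stars onto $F_\omega$. That map is asserted to be $1$-Lipschitz so that Proposition~\ref{prop:lip} applies.

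Your approach dispenses with the symbolic machinery and makes the role of $\dim M\ge 2$ transparent. More importantly, it carries out exactly the step the paper's final paragraph glosses over. The map $K\mapsto E(K)$ is not Lipschitz, not even continuous, for $d_H$: a triod with one edge of diameter $\eta$ is $\eta$-close to the arc obtained by deleting that edge, yet their endpoint sets stay at distance of order one. Moreover, a star with prescribed endpoints can later pass through test points it ought to miss. Your explicit control of how the connecting set meets the translates is precisely what turns separated finite sets into separated continua.

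Three points need tightening.

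\emph{The neck construction.} The version with necks through $p\in\partial U$ and connecting arcs in $M\setminus\bigcup_{j<N}f^{-j}(U)$ can genuinely fail. Closures $f^{-j}(\overline U)$ of different translates may meet, even along arcs, and then this complement need not be connected; the problem is touching, not accumulation. So take $\overline{U'}\subset U$ from the outset. The closed balls $D_j=f^{-j}(\overline{U'})$, $0\le j<N$, are pairwise disjoint and bicollared, being homeomorphic images of a round ball in a chart. Hence $W=M\setminus\bigcup_j D_j$ is open and connected. Join the points $f^{-j}(p')$ by arcs in $W$, where $p'\in U\setminus\overline{U'}$ lies on a short radial extension of the neck.

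\emph{Independence from $w$.} Let all $\beta_i\subset U'$ emanate from the inner end $c$ of one fixed neck $\tau\subset\overline{U'}$, meeting each other and $\tau$ only at $c$. Then $\gamma$ does not depend on $w$.

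\emph{Quantitative separation.} With $q_i$ the free endpoint of $\beta_i$, put $\eta:=\min_{i\ne l}\min\{d(q_i,\tau\cup\beta_l),\,d(q_i,M\setminus U')\}>0$. For $0\le j<N$ your construction gives $f^j(K_{w'})\cap U'\subset\tau\cup\beta_{w'_j}$. Hence $d_H(f^j(K_w),f^j(K_{w'}))\ge d(q_{w_j},f^j(K_{w'}))\ge\eta$ whenever $w_j\ne w'_j$, with $\eta$ independent of $N$, and the arcs $A_i$ become superfluous. Then $\mathrm{sep}(C(f),N,\eta;C(M))\ge k^N$, so $h(C(f))\ge\log k$ for every $k$.
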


Since Morse--Smale diffeomorphisms have finitely many non-wandering points, as a corollary of Theorem~\ref{thmr:infty}, we obtain the apreviously mentioned result from~\cite{AB} using alternative methods.

\begin{cor}\label{cor} Let $f:M\to M$ be a Morse-Smale diffeomorphism and $\dim M\ge 2$. Then $h(C(f))=\infty$.
\qed
\end{cor}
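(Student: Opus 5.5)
The statement immediately above is Corollary~\ref{cor}, so the plan is to reduce it to Theorem~\ref{thmr:infty}. By the standard definition, a Morse--Smale diffeomorphism $f$ of a closed manifold $M$ has a non-wandering set ${NW}(f)$ consisting of finitely many hyperbolic periodic points. I also take $M$ to be connected, which is implicit in the hypothesis that $C(M)$ is the hyperspace of continua; if $M$ were disconnected, I would restrict to a component containing a wandering point, which some iterate of $f$ preserves.

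First I check the hypotheses of Theorem~\ref{thmr:infty}. The manifold $M$ is compact and $\dim M\ge 2$. A diffeomorphism is in particular a homeomorphism.

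It remains to produce a wandering point. Since ${NW}(f)$ is finite and $M$ is a connected manifold of positive dimension, $M$ is uncountable. Hence $M\setminus {NW}(f)\neq\emptyset$, and any point in this complement is wandering by definition. Theorem~\ref{thmr:infty} then gives $h(C(f))=\infty$.

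There is no real obstacle here. The only point needing care is the convention for ``Morse--Smale'': finiteness of the non-wandering set has to be part of the definition being used, or follow from it. Under the usual axioms (the non-wandering set is hyperbolic and consists of finitely many periodic orbits, with transverse stable and unstable manifolds), this holds directly.
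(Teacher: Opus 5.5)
Your proposal is correct and is the paper's argument. The paper also notes that a Morse--Smale diffeomorphism has only finitely many non-wandering points, so the infinite manifold $M$ contains a wandering point, and then applies Theorem~\ref{thmr:infty}.
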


In systems of low complexity, i.e., with zero topological entropy, one can consider {\it polynomial entropy} instead. The definitions of topological and polynomial entropy differ in that the former measures the overall {\it exponential} growth of orbit complexity, while the latter captures the growth rate on a {\it polynomial} scale. Polynomial entropy can serve as a finer tool for distinguishing among systems with zero topological entropy.

As an example, consider a rotation on the circle versus a homeomorphism with both periodic and wandering points. The first system is clearly simpler than the second; however, the topological entropy of both systems is zero. In~\cite{L}, Labrousse proved that polynomial entropy is an invariant that distinguishes between these two circle homeomorphisms.

Topological and polynomial entropy share some basic properties: they are both conjugacy invariants, depend only on the topology (not on the specific choice of metric inducing it), satisfy the finite union property, and obey a product formula. However, there are important differences as well. Certain properties such as the power formula, the $\sigma$-union property, and the variational principle hold for topological entropy but do not necessarily hold for polynomial entropy (see~\cite{M1,M2}). The following consequence of Bowen's formula (see Theorem 17 in~\cite{Bo}) also holds for topological entropy but not for polynomial entropy: if $\pi:X\to Y$ is continuous semicongugacy between dynamical systems $(X,f)$ and $(Y,g)$ and is also uniformly finite-to-one (that is, there exists $M > 0$ such that $|\pi^{-1}(y)| \le M$ for all $y \in Y$), then $h(f)=h(g)$.

As mentioned above, $h(C(f))=0$ when $f$ is a homeomorphism of a graph. Moreover, in~\cite{JB}, the author showed that for a homeomorphism $f$ of a dendrite, $h(C(f))\in\{0,\infty\}$. Therefore, we investigate the polynomial entropy of $C(f)$ on certain one-dimensional spaces.

\begin{thrm}\label{thm:star1}
Let $X=S_k$ be a star with a branch point of order $k$ and $f:X\to X$ a homeomorphism. 
If every edge possesses a wandering point, then $h_{\mathrm{pol}}(C(f))=k$.\qed
\end{thrm}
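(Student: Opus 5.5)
Write $S_k=\bigcup_{i=1}^k E_i$ with branch point $v$ and endpoints $e_i$. Since $f$ is a homeomorphism it fixes $v$ and permutes the edges, so it permutes the endpoints. Because every edge contains a wandering point, I first reduce to the case where $f$ fixes every edge. Polynomial entropy satisfies $h_{\mathrm{pol}}(g^m)=h_{\mathrm{pol}}(g)$ for homeomorphisms: the iterates are equicontinuous on finite blocks, so $(n,\varepsilon)$-separated sets for $g$ and $(n/m,\delta)$-separated sets for $g^m$ are comparable. Also $C(f^m)=C(f)^m$. So I may replace $f$ by $f^{k!}$. Then each $f|_{E_i}$ is an increasing interval homeomorphism fixing $v$ and $e_i$. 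By hypothesis it is not the identity, so $\mathrm{Fix}(f|_{E_i})$ is a proper closed set and every point off it is wandering and converges monotonically to fixed points.

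The plan is a symbolic coding. Choose in each $E_i$ a fundamental domain of a wandering interval $J_i=[a_i,f(a_i)]$, with $a_i$ close to $v$ or $e_i$ as needed. A subcontinuum $A\in C(S_k)$ is either contained in one edge, or is a star $\bigcup_i[v,x_i]$ with $x_i\in E_i\cup\{v\}$. In the second case its $C(f)$-orbit is determined by the $k$ endpoint orbits $f^n(x_i)$, each moving monotonically in its own edge. For the coding, partition each edge into finitely many pieces: fixed-point neighbourhoods and the fundamental domains. The itinerary of $x_i$ is then a sequence that stays in one piece, moves through wandering pieces, and settles in another. Orbits of continua are thus coded by elements of an invariant, non-closed subset $\Sigma$ of a product of two-sided shifts. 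Over $n$ steps each coordinate contributes $O(n)$ distinct words, essentially the entry and exit times, so the complexity function satisfies $p_\Sigma(n)\asymp n^k$. Continua inside one edge, i.e. intervals $[x,y]$, contribute only $O(n^2)\le O(n^k)$ when $k\ge2$, and $k=1$ is the arc case, handled similarly. I would then invoke the paper's announced link between polynomial entropy and the complexity function, $h_{\mathrm{pol}}=\limsup \log p(n)/\log n$, to get the value $k$.

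For the lower bound, $h_{\mathrm{pol}}(C(f))\ge k$, I argue directly. Fix $\varepsilon$ smaller than the length of each $J_i$. For each multi-index $(t_1,\dots,t_k)\in\{0,\dots,n\}^k$, take the star whose $i$-th endpoint is $f^{-t_i}(a_i)$. Two such stars differing in some $t_i$ have endpoint orbits that, at a suitable time $m\le 2n$, lie on different sides of a fundamental domain. Their Hausdorff distance at that time is therefore at least $\varepsilon$. This gives $(n,\varepsilon)$-separated sets of size $\sim n^k$, hence $h_{\mathrm{pol}}\ge k$.

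The upper bound is the main obstacle. Non-wandering behaviour near fixed points, in particular accumulating fixed points in $E_i$, must not add complexity, and the Hausdorff metric must be controlled uniformly by the endpoint codes. I would first try to show that two continua with the same $n$-block code, up to a bounded number of boundary pieces, stay $\varepsilon$-close. Near a fixed point, displacement is small uniformly in time, so each coordinate enters and leaves the $\varepsilon$-scale wandering region at most once. This yields a cover by $O(n^k)$ Bowen balls. Rigorously handling the infinitely many fixed-point components, by grouping them into finitely many $\varepsilon$-pieces, is where I expect most of the technical work.
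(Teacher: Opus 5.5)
Your lower bound is correct, and it is more direct than the paper's. The paper passes through the finite-set model $Y_E$, the discontinuous coding $\varphi$ into $(\{0,1\}^k)^{\mathbb{Z}}$ and Theorem~\ref{thm:complexity}. Your $(n+1)^k$ stars $\bigcup_i[v,f^{-t_i}(a_i)]$ are instead separated directly: if $t_i<s_i$, the $i$-th arms at time $s_i\le n$ end at $a_i$ and at $f^{s_i-t_i}(a_i)$, which lie on opposite sides of $J_i$.

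The genuine gap is the upper bound $h_{\mathrm{pol}}(C(f))\le k$, which you only sketch. The tool you plan to invoke cannot supply it. Theorem~\ref{thm:complexity} computes $h_{\mathrm{pol}}(\sigma;\Sigma)$ for the shift. Your coding, however, goes from $C(X)$ into the shift and is not continuous. So no factor map or Lipschitz map (Proposition~\ref{prop:lip}) carries a bound on $p_\Sigma$ back to $C(f)$. You would have to show by hand that equal codes force $d_n^f$-closeness, and that is exactly the step you leave open.

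\textbf{Missing idea: semiconjugacy from a product.} This removes the fixed-point bookkeeping entirely. After your reduction to $f(E_i)=E_i$, define $\Phi(x_1,\dots,x_k)=\bigcup_i[v,x_i]$. It is a continuous surjection from $E_1\times\cdots\times E_k$ onto the closed, $C(f)$-invariant set of continua containing $v$, and it satisfies $\Phi\circ(f|_{E_1}\times\cdots\times f|_{E_k})=C(f)\circ\Phi$. Hence that part has polynomial entropy at most $\sum_i h_{\mathrm{pol}}(f|_{E_i})=k$, by the product formula and Labrousse's lemma $h_{\mathrm{pol}}(f|_{E_i})=1$ \cite{L}. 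Every continuum missing $v$ lies in a single edge, and $h_{\mathrm{pol}}(C(f|_{E_i}))=2$ by \cite{DK}. The finite union property then gives $\max\{k,2\}=k$. This is essentially the paper's argument, via $\pi:I_1\times\cdots\times I_k\to Y_E$ and the conjugacy $\psi$.

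Even the one-dimensional bound needs no grouping of accumulating fixed points, only monotonicity:
\begin{itemize}
\item Let $P$ cut $E_i$ into $N$ intervals of diameter $<\varepsilon$.
\item The at most $n(N-1)$ points $f^{-j}(c)$, with $c$ a cut point of $P$ and $0\le j<n$, cut $E_i$ into at most $n(N-1)+1$ intervals $J$.
\item Each $f^j(J)$ is an interval whose interior avoids the cut points of $P$, so it lies in one piece. Hence $J$ has $d_n^f$-diameter $<\varepsilon$.
\end{itemize}
This gives your ``$O(n)$ words per coordinate'' rigorously for any interval homeomorphism, and products of such intervals pushed through $\Phi$ give $O(n^k)$ sets of small $d_n$-diameter covering the star part.

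Finally, your remark that $k=1$ is ``handled similarly'' is wrong. For an arc with a wandering point, $h_{\mathrm{pol}}(C(f))=2\neq 1$. The statement is really about $k\ge 3$, where $v$ is a genuine branch point and is fixed. For $k=2$ the point $v$ need not be fixed, and that case is the interval result of \cite{DK}.
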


The proofs of Theorem~\ref{thmr:infty} and Theorem~\ref{thm:star1} rely on expressing the topological and polynomial entropy in terms the complexity function of certain subsets of the two-sided shift space (see~Subsetion~\ref{subsec:entr-subshift}).

The following corollary of Theorem~\ref{thm:star1} provides a complete description of $h_{\mathrm{pol}}(C(f))$ for a homeomorphism $f$ of a graph.

\begin{cor} Let $X$ be a graph and $f:X\to X$ a homeomorphism. Let $k$ be the maximal number of edges with a common branch point that possess a wandering point. Then
\begin{itemize}\label{cor:graph1}
\item $h_{\mathrm{pol}}(C(f))=k$ if $k\ge 2$;
\item $h_{\mathrm{pol}}(C(f))=2$ if $k\le 1$ and there exists a closed path $C\subset X$ such that $f^n|_{C}$ is not conjugate to the circle rotation, for all $n$;
\item $h_{\mathrm{pol}}(C(f))=0$ otherwise.
\end{itemize}
\end{cor}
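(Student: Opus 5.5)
The plan is to reduce the corollary to Theorem~\ref{thm:star1}, to the Lampart--Raith zero-entropy results, and to Labrousse-type computations on circles. The two tools throughout are the finite union property of polynomial entropy and monotonicity under restriction to closed invariant subsets.

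First replace $f$ by a power $f^N$ that fixes every vertex and maps every edge onto itself with its endpoints fixed. Since $f$ permutes the finite sets of vertices and edges, such an $N$ exists. The step from $f^N$ back to $f$ then rests on the identity $h_{\mathrm{pol}}(g^N)=h_{\mathrm{pol}}(g)$ for homeomorphisms, which I expect to hold because the Bowen-ball counts for $g^N$ over $n$ steps and for $g$ over $Nn$ steps are comparable up to a change of $\epsilon$. The wandering/non-wandering status of edges is unaffected by passing to $f^N$. The exception is a closed path $C$ permuted cyclically, which is exactly why the second item is phrased with $f^n|_C$.

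After the reduction, every edge $e$ is either pointwise non-wandering, in which case $f|_e$ is the identity, or contains wandering points. In the latter case $f|_e$ is monotone with fixed points, and it has interior wandering intervals. For the lower bound in the first item, take a branch point $v$ with $k$ wandering edges. A small closed star $S\subset X$ around $v$ built from these edges, extended along them up to suitable fixed points so that it is $f$-invariant, is a star $S_k$ on which every edge has a wandering point. Since $C(S)\subset C(X)$ is closed and invariant, Theorem~\ref{thm:star1} gives $h_{\mathrm{pol}}(C(f))\ge k$.

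For the upper bound, decompose $C(X)$ according to the combinatorial type of a continuum $A$: which vertices it contains and which edges it meets fully, partially, or from one or both ends. There are finitely many types. Each type is invariant up to the finitely many ways a fixed vertex can lie inside $A$, and each piece is described by at most one endpoint coordinate per partially-covered edge end. A continuum whose "active" endpoints lie in identity edges contributes only zero-growth coordinates. The endpoint coordinates on wandering edges near a common vertex behave as in the star case. Coordinates on edges at different branch points evolve independently, but in a continuum only edges incident to a common vertex can have both free ends moving near that vertex, plus at most two "far" ends along a path. I expect this bookkeeping to bound each piece by a product of at most $k$ single-edge factors, each of polynomial entropy $\le 1$ for a monotone interval map with wandering points. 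Summing via the finite union property and using the product formula gives $\le\max(k,2)$. The case $2$ arises from continua whose two free ends lie on wandering edges far apart; these contribute a product of two such factors.

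For the second and third items ($k\le1$): if some closed path $C$ is not conjugate to a rotation under every power, then $f^N|_C$ has fixed points with wandering arcs between them. Arcs in $C$ containing a wandering arc then give, via the two free endpoints, $h_{\mathrm{pol}}\ge 2$; this is the circle computation with two independent endpoints, each of growth exponent $1$. Without such a path, every edge with a wandering point lies in no cycle of that type and meets no other wandering edge at a vertex. Then every piece is either a full-identity piece or has a single moving coordinate, and that coordinate lies on a tree-like part where wandering orbits converge monotonically. I expect those pieces to have polynomial entropy $0$, or at most $1$; the corollary claims $0$, so the single-coordinate monotone case must be shown to be equicontinuous. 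That holds when the wandering edge is pendant or bounded by identity edges, since then $C(f)$ restricted to that piece is conjugate to an interval homeomorphism, and interval homeomorphisms have $h_{\mathrm{pol}}=0$ when considered as a single coordinate.

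The main obstacle is the upper-bound bookkeeping. Specifically, one must show that the number of simultaneously "wandering" endpoint coordinates of a continuum never exceeds $\max(k,2)$ when $k\ge1$, and that a single coordinate gives exponent $0$ rather than $1$ in the third case. I would try first to prove the single-edge estimate directly with Bowen balls.
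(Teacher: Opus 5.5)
Your lower bound for the first item is sound. After passing to $f^N$, the union of the $k$ wandering edges at $v$ is a closed invariant $k$-star, and Theorem~\ref{thm:star1} with monotonicity gives $h_{\mathrm{pol}}(C(f))\ge k$. The gap is exactly the step you flag as the main obstacle, and it cannot be filled, because the bound $\max(k,2)$ is false. You claim that only edges at a common vertex can carry free ends moving near that vertex, plus at most two far ends. This ignores continua that contain a whole edge between two branch points.

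Take the tree with branch points $b_1,b_2$ joined by an edge $e_0$, with pendant edges $e_1,e_2$ at $b_1$ and $e_3,e_4$ at $b_2$. Let $f$ be the identity on $e_0$ and $t\mapsto t^2$ on each $e_i\cong[0,1]$ for $1\le i\le 4$, with $0$ at the branch point. Here $k=2$ and there is no closed path. Now consider the map $(t_1,\ldots,t_4)\mapsto e_0\cup\bigcup_{i=1}^4\{s\in e_i\mid s\le t_i\}$. It embeds $[0,1]^4$ onto a closed $C(f)$-invariant subset of $C(X)$, and there it conjugates $C(f)$ to $g^{\times 4}$ with $g(t)=t^2$. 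Once $\varepsilon$ is small, $\mathrm{sep}(g,n,\varepsilon)\ge n$ for all $n$ (cf.~\cite{L}). Products of separated sets then give $\mathrm{sep}(g^{\times 4},n,\varepsilon)\ge n^4$, so $h_{\mathrm{pol}}(C(f))\ge 4>k$. More generally, a path of identity edges through $r$ branch points, each carrying one wandering pendant edge, has $k=1$ but $h_{\mathrm{pol}}(C(f))\ge r$.

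Your plan for the third item fails for a more basic reason. A homeomorphism of an interval with a wandering point has polynomial entropy $1$, not $0$; you use this yourself in the upper-bound paragraph. More to the point, if some edge $e$ has a wandering point, the subarcs of $e$ form a closed invariant subset of $C(X)$ on which $h_{\mathrm{pol}}=2$ by~\cite{DK}. So a triod with exactly one wandering edge has $k=1$ and no closed path, yet $h_{\mathrm{pol}}(C(f))=2$. This is also the value the paper's own corollary for stars gives with $l=1$, and it contradicts the value $0$ in the third item.

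Hence the corollary is false as stated, and no bookkeeping can prove it. The paper's proof does not repair this. It asserts $h_{\mathrm{pol}}(C(f^m))=\max_{i,j}\{h_{\mathrm{pol}}(C(f^m)|_{S_j}),h_{\mathrm{pol}}(C(f^m)|_{C_i})\}$. That tacitly assumes every subcontinuum lies in a single star $S_j$ or a single closed path $C_i$, which is precisely what the examples above violate.

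Separately, your treatment of the second item overlooks Denjoy-type circle homeomorphisms, which have no periodic points, so $f^N|_C$ need not have fixed points; the paper covers these by citing~\cite{DjK}.
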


Lastly, we give an affirmative answer to a question posed in~\cite{AB}.
\begin{thrm}\label{thm:question} There exists a dynamical system such that:
$$h(C(f))<h(C_n(f))<h(C_{n+1}(f))<h(2^f)$$ for all $n\ge 2$. There exists a dynamical system such that:
$$h_{\mathrm{pol}}(C(f))<h_{\mathrm{pol}}(C_n(f))<h_{\mathrm{pol}}(C_{n+1}(f))<h_{\mathrm{pol}}(2^f)$$ for all $n\ge 2$.\end{thrm}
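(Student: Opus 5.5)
For the topological part I would take $X=\Sigma_2=\{0,1\}^{\mathbb Z}$ with the shift $\sigma$. For the polynomial part I would take the arc $X=[0,1]$ with a homeomorphism $f$ that fixes only $0,\tfrac12,1$, so that both edges of $X$, viewed as the star $S_2$, contain wandering points. Here $C_n(X)$ is the hyperspace of nonempty closed subsets with at most $n$ components and $C_n(f)=2^f|_{C_n(X)}$. In both cases the plan is to show that the entropy of $C_n(f)$ equals $n$ times a fixed positive quantity, and that the entropy of $2^f$ is infinite.

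\emph{Topological entropy.} Since $\Sigma_2$ is totally disconnected, $C(\Sigma_2)$ is the set of singletons and $C_n(\Sigma_2)$ is the set of subsets with at most $n$ points. Hence $h(C(\sigma))=h(\sigma)=\log 2$.
\begin{itemize}
\item \emph{Upper bound for $C_n$.} The map $\pi:\Sigma_2^n\to C_n(\Sigma_2)$, $(x_1,\dots,x_n)\mapsto\{x_1,\dots,x_n\}$, is a continuous surjective semiconjugacy from $\sigma^{\times n}$ onto $C_n(\sigma)$. It is uniformly finite-to-one, with fibres of cardinality at most $n^n$. By Bowen's formula quoted in the introduction, $h(C_n(\sigma))=h(\sigma^{\times n})=n\log 2$. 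Even without Bowen's formula, the factor gives $\le$.
\item \emph{Lower bound for $C_n$.} For a direct argument, take $n$ points whose orbit segments are $(N,\varepsilon)$-separated coordinatewise. Choosing them with pairwise distinct leading symbols makes the associated sets separated in the Hausdorff metric.
\item \emph{The full hyperspace.} $h(2^\sigma)=\infty$ by Bauer--Sigmund, since $h(\sigma)>0$.
\end{itemize}
This gives $\log 2<n\log 2<(n+1)\log 2<\infty$ for all $n\ge 2$.

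\emph{Polynomial entropy.} By Theorem~\ref{thm:star1} with $k=2$, $h_{\mathrm{pol}}(C(f))=2$. The goal is $h_{\mathrm{pol}}(C_n(f))=2n$ and $h_{\mathrm{pol}}(2^f)=\infty$.
\begin{itemize}
\item \emph{Upper bound for $C_n$.} Each element of $C_n(X)$ is a union of at most $n$ closed intervals, so it is the image of a point $(a_1\le b_1\le\dots\le a_n\le b_n)$ of the ordered simplex $\Delta\subset X^{2n}$. Since $f$ is increasing, $\Delta$ is $f^{\times 2n}$-invariant, and the union map $\Delta\to C_n(X)$ is a continuous surjective semiconjugacy. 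Polynomial entropy does not increase under factors and satisfies the product formula. Moreover $h_{\mathrm{pol}}(f)=1$ for such an $f$ (Labrousse). Hence $h_{\mathrm{pol}}(C_n(f))\le 2n$.
\item \emph{Lower bound for $C_n$.} I would restrict to sets $\bigcup_{i\le n}[a_i,b_i]$ with $a_i\in[0,\tfrac12)$ or in $(\tfrac12,1]$, and with the intervals kept disjoint by fixed separating fundamental domains. Coding each endpoint by the time it spends in a fundamental domain gives $2n$ essentially independent "times of passage". The number of $(N,\varepsilon)$-separated orbit segments is then of order $N^{2n}$. This mirrors the proof of Theorem~\ref{thm:star1}, now with $2n$ endpoints instead of $2$, using the complexity function of the coding subset of the shift.
\item \emph{The full hyperspace.} $2^X$ contains, for each $m$, the invariant set of $m$-point subsets of $(0,\tfrac12)$. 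The same coding shows it has polynomial entropy at least $m$, so $h_{\mathrm{pol}}(2^f)=\infty$.
\end{itemize}
This gives $2<2n<2n+2<\infty$ for all $n\ge 2$.

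\emph{Main obstacle.} The lower bound $h_{\mathrm{pol}}(C_n(f))\ge 2n$ is the delicate step. Because polynomial entropy does not satisfy Bowen's finite-to-one formula, the product lower bound cannot simply be pushed down. One must check that distinct codings of the $2n$ endpoints stay Hausdorff-separated, in particular that components do not merge or cross $\tfrac12$ in a way that destroys separation. I would first handle this by confining every endpoint to a compact region avoiding neighbourhoods of the fixed points at the relevant times, as in the proof of Theorem~\ref{thm:star1}.
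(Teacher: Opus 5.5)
Your topological half and your polynomial upper bound are correct, but the polynomial lower bound $h_{\mathrm{pol}}(C_n(f))\ge 2n$ is only sketched. That lower bound carries the second chain of inequalities, so the proposal has a genuine gap there.

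\textbf{What works, and how it compares with the paper.}
\begin{itemize}
\item The paper takes a positive-entropy interval map and passes from $C_n(X)$ to $F_{2n}(X)$ via endpoints. You take the full shift, where $C_n(\Sigma_2)=F_n(\Sigma_2)$, and Bowen's theorem applies directly to the finite-to-one factor map $\Sigma_2^n\to F_n(\Sigma_2)$.
\item Your route avoids the problems of the endpoint map. That map is not continuous on $C_n([0,1])$ for $n\ge 2$. It also does not commute with $f$ when $f$ is not monotone, and a positive-entropy interval map never is.
\item The price is that $\Sigma_2$ is totally disconnected, so $C(\Sigma_2)$ is just the singletons. This proves the statement as written, but it is not an example on a continuum.
\item Your bound $h_{\mathrm{pol}}(C_n(f))\le 2n$ via the ordered simplex $\Delta\subset X^{2n}$ is correct. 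It is a sound version of the paper's claim that $C_2(f)$ is a factor of $F_4(f)$.
\item $h_{\mathrm{pol}}(2^f)=\infty$ follows from $F_m(X)\subset 2^X$ and $h_{\mathrm{pol}}(F_m(f))=m$.
\end{itemize}

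\textbf{The gap.} The lower bound cannot be skipped. From $F_n(X)\subset C_n(X)$ and your upper bound you only get $n\le h_{\mathrm{pol}}(C_n(f))\le 2n$. That yields neither $h_{\mathrm{pol}}(C_n(f))<h_{\mathrm{pol}}(C_{n+1}(f))$ nor $h_{\mathrm{pol}}(C(f))<h_{\mathrm{pol}}(C_n(f))$.

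You are right that Hausdorff separation is the issue, but the danger you single out cannot occur. Since $f$ is a homeomorphism fixing $\tfrac12$, $f^j(K)$ always has as many components as $K$, and nothing crosses $\tfrac12$. Confining endpoints away from the fixed points does not address the actual problem, which is that passage times through fundamental domains are a discontinuous code.
\begin{itemize}
\item An endpoint at the right end of one fundamental domain and one at the left end of the next get different codes but are arbitrarily close at every time. So counting codes does not count $(N,\delta)$-separated sets with $\delta$ independent of $N$.
\item A differing endpoint of one set may lie inside a component of the other set. You must therefore produce a point of one set that is far from the whole other set, not just compare endpoints.
\end{itemize}
The paper's own lower bound has the same defect, since it identifies sets with two nondegenerate components with $4$-point sets via endpoints. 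For instance, $[0,\tfrac12]\cup[\tfrac12+\eta,1]$ and $[0,\eta]\cup[2\eta,1]$ are within $\eta/2$ in $d_H$, while their endpoint sets are about $\tfrac12$ apart.

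\textbf{How to close it.} Pin all endpoints to one wandering orbit and code sets by which orbit points they contain, as the map $\varphi$ does in the proof for stars.
\begin{itemize}
\item Let $x_r=f^r(x)$ be strictly decreasing in $r$. Take integers with $-(N-1)\le r_1\le r_2$, $r_{2i}+2\le r_{2i+1}\le r_{2i+2}$ for $1\le i\le n-1$, and $r_{2n}\le 0$. Set $K=\bigcup_{i=1}^n[x_{r_{2i}},x_{r_{2i-1}}]\in C_n(X)$.
\item The set $\{s\in\mathbb{Z}: x_s\in K\}=\bigcup_i\bigl([r_{2i-1},r_{2i}]\cap\mathbb{Z}\bigr)$ determines the code. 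Hence for $K\neq K'$ there is $s\in[-(N-1),0]$ with, say, $x_s\in K'\setminus K$.
\item At time $j=-s$ the point $x_0$ lies in $f^j(K')$ but not in $f^j(K)$. The points of $f^j(K)$ nearest to $x_0$ are endpoints $x_t$ with $t\neq 0$. By monotonicity of the orbit, $d_H(f^jK,f^jK')\ge\min\{x_{-1}-x_0,\,x_0-x_1\}$, a constant independent of $N$.
\item There are at least $c_nN^{2n}$ admissible codes. Hence $h_{\mathrm{pol}}(C_n(f))\ge 2n$.
\end{itemize}
Together with your upper bound this gives $h_{\mathrm{pol}}(C_n(f))=2n$, and with it the second chain of inequalities.
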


\section{Preliminaries}

Let us recall some notions and properties that will be used in the proofs.

\subsection{Hyperspaces and induced maps}

For a compact metric space $(X,d)$, the hyperspace $2^X$ denotes the collection of all nonempty closed subsets of $X$.
The topology on $2^X$ is induced by the Hausdorff metric
$$d_H(A,B):=\inf\{\varepsilon>0\mid A\subset U_\varepsilon(B),\;B\subset U_\varepsilon(A)\},$$
where
\begin{equation}\label{eq:neighb}
U_\varepsilon(A):=\{x\in X\mid d(x,A)<\varepsilon\}.
\end{equation}
The space $2^X$, equipped with the Hausdorff metric, is called a {\it hyperspace induced by $X$}. It is compact with respect to Hausdorff metric.

We consider the following susbets of $2^X$:
\begin{itemize}
\item $C(X):=\{K\in 2^X\mid K\;\mbox{is connected}\}$;
\item $C_n(X):=\{K\in 2^X\mid K\;\mbox{has at most {\it n} connected components}\}$;
\item $F_n(f):=\{K\in 2^X\mid K\;\mbox{has at most {\it n} elements}\}$;
\item $F_\omega:=\bigcup_{n\in\mathbb{N}} F_n(X)$.
\end{itemize}

If $X$ is also connected (and hence a continuum), then $C(X)$ is compact and connected. The set $C(X)$ is called the {\it hyperspace of subcontinua} of $X$. The set $F_n(X)$ is called the \textit{$n$-fold symmetric product of} $X$. The sets $C(X)$, $F_n(X)$ and $C_n(X)$ are closed in $2^X$, but $F_\omega(X)$ is not. Moreover, the closure of $F_\omega(X)$ is the whole space $2^X$.

If $f:X\to X$ is continuous, then it induces continuous maps
$$\begin{aligned}2^f&:2^X\to 2^X,\quad 2^f(A):=\{f(x)\mid x\in A\}\\
C(f)&:C(X)\to C(X),\quad C(f):=2^f|_{C(X)}\\
C_n(f)&:C_n(X)\to C_n(X),\quad C_n(f):=2^f|_{C_n(X)}\\
F_n(f)&:F_n(X)\to F_n(X),\quad F_n(f):=2^f|_{F_n(X)}.\\
\end{aligned}$$
If $f$ is a homeomorphism, so are $2^X$, $C(f)$, $C_n(f)$ and $F_n(f)$.

The sets $C(X)$, $C_n(X)$, $F_n(X)$ and $F_\omega(X)$ are all $2^f$-invariant. 

In this paper we mostly deal with the hyperspace $C(X)$.

We will use small latin letters $x$ for points in the initial space $X$, and capital latin letters $K$ for points in the induced hyperspaces.

We denote the open and close balls by $B(x,r)$ and $B[x,r]$. %An open and a closed balls in $C(X)$ will be denoted by $B_H(X,r)$ and $B_H[X,r]$.

\subsection{Graph, $k$-od space (star) and tree}

A metric space $D$ is a \textit{dendrite} if it is nonempty, connected, locally connected and compact and contains no simple closed curves. We say that 
\begin{itemize}
\item $x\in D$ is an \textit{end point} of $D$ if $D\backslash\{x\}$ is connected
\item $x\in D$ is a \textit{cut point} of $D$ if $D\backslash\{x\}$ is not connected.
\end{itemize}
Note that if $D=\{x\}$ is a singleton, then $x$ is its end point.
 We call the number of components of $D\backslash\{x\}$ {\it the order of} $x$. If the order of $x$ is equal to $2$, then $x$ is an \textit{ordinary point} of $D$. If the order of $x$ is greater than $2$, then $x$ is a \textit{branch point} of $D$. We denote by $B(D)$ the set of all branch points and by $E(D)$ the set of all endpoints of $D$. An arc (a homeomorphic image of an interval) that connects two branch points, two end points, or a branch point and an end point is called an {\it egde}. A tree is a dendrite with a finite set of end points. A $k$-{\it star ($k$-od space}) is a tree with only one branch point of order $k$.

It is known that every subcontinuum (i.e., connected and closed subset) of a dendrite is also a dendrite; therefore, every subcontinuum of a tree is again a tree (see~\cite{N}).

By a {\it graph} we mean a compact metric space $G$ together with a finite zero-dimensional set $V\subset G$ (called {\it vertices}) such that $G\setminus V$ is homeomorphic to a finite disjoint union of intervals $(0,1)$. We call the closure of each connected component of $G\setminus V$ an {\it edge}. Note that (unlike a dendrite) a graph may contain a closed path, i.e.\ homeomorphic copy of the circle. A branch point in a graph is a vertex where three or more egdes meet.

Note that a graph without a closed path is a tree; therefore, a star is a special type of tree, and a tree is both a special type of graph and a special type of dendrite.

\subsection{Topological and polynomial entropy}\label{subsec:entropy}

Let $X$ be a compact metric space, and $f:X\rightarrow X$ a continuous map. Denote by $d_n^f(x,y)$ the dynamic metric (induced by $f$ and $d$):
$$
d_n^f(x,y)=\max\limits_{0\leq k\leq n-1}d(f^k(x),f^k(y)).
$$

Fix $Y\subseteq X$. For $\varepsilon>0$, we say that a finite set $E\subset X$ is \textit{$(f,n,\varepsilon)$-separated} (with respect to $f$) if for every two different $x,y \in E$ it holds $d_n^f(x,y)\geq \varepsilon$. Let $\mathrm{sep}(n,\varepsilon;Y)$ denote the maximal cardinality of an $(n,\varepsilon)$-separated set $E$ contained in $Y$.\label{def:sep}

\begin{defn} The \textit{topological entropy} of the map $f$ on the set $Y$ is defined by
$$
h(f;Y)=\lim\limits_{\varepsilon \rightarrow 0}\limsup\limits_{n\rightarrow \infty}\frac{\log \mathrm{sep}(n,\varepsilon;Y)}{n}.
$$
\end{defn}
We we also use the notation $\mathrm{sep}(f,n,\varepsilon;Y)$, if we want to emphasize the map in question, as well as $\mathrm{sep}(n,\varepsilon)$ when $Y=X$.
If $X=Y$ we abbreviate $h(f):=h(f;X)$. It is obvious that $h(f)\ge h(f;Y)$, for all $Y\subset X$. If $Y$ is a closed and $f$-invariant subset, then $h(f;Y)=h(f|_Y)$.

The topological entropy can also be defined via coverings with sets of $d^n_f$-diameters less than $\varepsilon$, via coverings by balls of $d_n^f$-radius less than $\varepsilon$, or via open covers. We refer the reader to~\cite{Li} and the references therein for a brief survey on the topological entropy (see also Section~\ref{sec:proof}).

The following property of the topological entropy is well known: if the dynamical system $(X_2,f_2)$ is a {\it factor} of the dynamical system $(X_1,f_1)$ (meaning that there exists a countinuous surjection $\varphi:X_1\to X_2$ satisfying $\varphi\circ f_1=f_2\circ\varphi$), then $h(f_2)\le h(f_1)$. However, we will use the relative version of it, with respect to $Y_1$ and $Y_2$, when the spaces $Y_1$ and $Y_2$ are not necessarily compact. More precisely, the following proposition holds.

\begin{prop}\label{prop:lip}
Let $(X_1,d_1)$ and $(X_2,d_2)$ be two compact metric spaces and $Y_j\subset X_j$ be an $f_j$-invariant (not necessarily compact) set and $\pi:Y_1\to Y_2$ a continuous surjection satisfying $\pi\circ f_1=f_2\circ\pi$. If $\pi$ a Lipschitz map with Lipschitz constant $L$, then
$$h(f_1;Y_1)\ge h(f_2;Y_2).$$
\end{prop}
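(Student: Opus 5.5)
The plan is to transfer separated sets from $Y_2$ back to $Y_1$ through $\pi$. The Lipschitz hypothesis will then convert $d_2$-separation into $d_1$-separation at a comparable scale. Fix $\varepsilon>0$ and $n\in\mathbb N$. Let $E_2\subset Y_2$ be an $(f_2,n,\varepsilon)$-separated set of maximal cardinality $\mathrm{sep}(f_2,n,\varepsilon;Y_2)$. If the supremum is infinite, take finite separated sets of arbitrarily large size; the argument below is unchanged. Since $\pi$ is surjective onto $Y_2$, we can choose for each $y\in E_2$ one preimage $x_y\in Y_1$ with $\pi(x_y)=y$, and set $E_1=\{x_y\mid y\in E_2\}\subset Y_1$. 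Then $|E_1|=|E_2|$.

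The key step is to check that $E_1$ is $(f_1,n,\varepsilon/L)$-separated. Take $y\neq y'$ in $E_2$. There is some $0\le k\le n-1$ with $d_2(f_2^k(y),f_2^k(y'))\ge\varepsilon$. By invariance of $Y_1$ and the conjugacy relation $\pi\circ f_1=f_2\circ\pi$, iterated to $\pi\circ f_1^k=f_2^k\circ\pi$ on $Y_1$, we have $f_2^k(y)=\pi(f_1^k(x_y))$ and $f_2^k(y')=\pi(f_1^k(x_{y'}))$, with both $f_1^k(x_y)$ and $f_1^k(x_{y'})$ lying in $Y_1$. The Lipschitz bound therefore gives
$$\varepsilon\le d_2\bigl(\pi(f_1^k(x_y)),\pi(f_1^k(x_{y'}))\bigr)\le L\, d_1\bigl(f_1^k(x_y),f_1^k(x_{y'})\bigr).$$
Hence $d^{f_1}_{1,n}(x_y,x_{y'})\ge\varepsilon/L$. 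In particular $x_y\ne x_{y'}$, so the map $y\mapsto x_y$ is injective. (If $L=0$ then $\pi$ is constant, $Y_2$ is a point, and the inequality is trivial, so we may assume $L>0$.)

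This yields, for every $n$ and $\varepsilon$,
$$\mathrm{sep}(f_2,n,\varepsilon;Y_2)\le \mathrm{sep}(f_1,n,\varepsilon/L;Y_1).$$
We take $\log$, divide by $n$, and pass to $\limsup_{n\to\infty}$, and then let $\varepsilon\to0$. Since $\varepsilon/L\to0$ as well, and the quantity $\limsup_n\frac1n\log\mathrm{sep}(n,\delta)$ is monotone in $\delta$ (so the limit in $\delta$ exists, possibly equal to $\infty$), the right-hand side tends to $h(f_1;Y_1)$. This gives $h(f_2;Y_2)\le h(f_1;Y_1)$.

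There is no real obstacle here. The only points needing care are the following. The separated sets must lie in $Y_j$, which is guaranteed by invariance and by choosing preimages inside $Y_1$. The maximal cardinality might be infinite, which the argument handles directly. Continuity of $\pi$ is not actually needed beyond the Lipschitz condition. The same inequality between separation numbers also gives the analogous statement for polynomial entropy, after dividing by $\log n$ instead of $n$.
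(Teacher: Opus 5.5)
Your proof is correct and matches the paper's argument: you lift an $(f_2,n,\varepsilon)$-separated subset of $Y_2$ to $Y_1$ by choosing preimages, then use $\pi\circ f_1^k=f_2^k\circ\pi$ and the Lipschitz bound to get $\mathrm{sep}(f_1,n,\varepsilon/L;Y_1)\ge\mathrm{sep}(f_2,n,\varepsilon;Y_2)$, and pass to the limits. Your extra remarks (the degenerate case $L=0$, injectivity of the lift, and monotonicity in $\varepsilon$ justifying the limit) are harmless refinements that the paper leaves implicit.
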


\noindent{\it Proof.} Let $E=\{y_1,\ldots,y_N\}\subset Y_2$ is an $(f_2,n,\varepsilon)$-separated set and $x_j\in Y_1$ such that $\pi(x_j)=y_j$, for $j\in\{1,2\}$. We have
$$\begin{aligned}
&d_{X_1}(f^k_1(x_i),f_1^k(x_j))\ge\frac{1}{L} d_{X_2}(\pi(f_1^k(x_i)),\pi(f_1^k(x_j)))=\\
&\frac{1}{L}d_{X_2}(f_2^k(\pi(x_i)),f_2^k(\pi(x_j)))=\frac{1}{L}d_{X_2}(f_2^k(y_i),f_2^k(y_j)),
\end{aligned}$$ so
$$(d_{X_1})_n^{f_1}(x_i,x_j)\ge \frac{1}{L}(d_{X_2})_n^{f_2}(y_i,y_j)\ge\frac{\varepsilon}{L},$$ and therefore the set 
$$\widetilde{E}:=\{x_1,\ldots,x_N\}\subset Y_1$$ is an $(f_1,n,\varepsilon/L)$-separated set. We have:
$$\mathrm{sep}(f_1,n,\varepsilon/L;Y_1)\ge\mathrm{sep}(f_2,n,\varepsilon;Y_2),$$ so
$$h(f_1;Y_1)\ge h(f_2;Y_2)$$
\qed

\vspace{5mm}

When the topological entropy of the system is equal to zero, one useful tool for measuring compelxity is {\it polynomial entropy}. It is defined as 
$$
h_{\mathrm{pol}}(f;Y)=\lim\limits_{\varepsilon \rightarrow 0}\limsup\limits_{n\rightarrow \infty}\frac{\log \mathrm{sep}(n,\varepsilon;Y)}{\log n},
$$
i.e.\ it measures the growth rate on a polynomial scale. With respect to factors and conjugacy, polynomial and topological entropy shares the same properties. Proposition~\ref{prop:lip} also holds for polynomial entropy. We will also use the following property of polynomial entropy: if $f:X\to X$, $g:Y\to Y$ are two continuous map, and
$$f\times g:X\times Y\to X\times Y,\quad (f\times g)(x,y):=(f(x),g(y)),$$ then
$$h_{\mathrm{pol}}(f\times g)=h_{\mathrm{pol}}(f)+h_{\mathrm{pol}}(g),$$ so for $f^{\times k}:X^{\times k}\to X^{\times k}$ it holds:
$$h_{\mathrm{pol}}(f^{\times k})=kh_{\mathrm{pol}}(f).$$ 
 If $Y=\bigcup_{j=1}^mY_j$ where $Y_j$ are $f$-invariant, then 
\begin{equation}\label{eq:finite-union}
 h_{\mathrm{pol}}(f;Y)=\max\{h_{\mathrm{pol}}(f;Y_j)\mid j=1,\ldots,m\},
\end{equation} 
see~\cite{M2}. This is also true for the topological entropy.
Let us also mention a property of the polynomial entropy that does not hold for the topological entropy, with the same proof. If we denote $f^k:=f\circ\ldots\circ f$, then
$$h_{\mathrm{pol}}(f^k)=h_{\mathrm{pol}}(f).$$

\vspace{5mm}

We will also use the following notions from topological dynamics.
A point $x\in X$ is {\it wandering} if there exists a neighbourhood $U\ni x$ such that $f^n(U)\cap U=\emptyset$, for all $n\ge 1$.

A point that is not wandering is said to be {\it non-wandering}. We denote the set of all non-wandering points by $NW(f)$. The set $NW(f)$ is  closed and $f$-invariant and it holds $h(f)=h(f|_{NW(f)})$ (see~\cite{K}).

\subsection{Entropies for subshifts}\label{subsec:entr-subshift}

In general, computing the topological entropy of a dynamical system can be difficult. However, for the shift map, there is a particularly convenient method to do so. For a finite set $A$ (called an {\it alphabet}), denote by $A^\mathbb{Z}$ the set of all two-sided sequences $\mathbf{x}=\{x_n\}_{n=-\infty}^\infty$ where $x_n\in A$. The metric on $A^\mathbb{Z}$ is defined as:
$$d(x,y):=2^{-k},$$ where
$$k=\min\{|n|\mid x_n\neq y_n\}.$$ 
The topology induced by $d$ coincides Tychonoff product topology on $A^\mathbb{Z}$.
The space $A^\mathbb{Z}$ is a compact, totally disconnected metric space without isolated points. The {\it shift map} is defined as:
\begin{equation}\label{eq:shift-A}
\sigma:A^\mathbb{Z}\to A^\mathbb{Z},\quad\{\sigma(\mathbf{x})\}_n:=x_{n+1}
\end{equation}
and it is a homeomorphism. Any closed subset $\Sigma\subseteq A^\mathbb{Z}$ with $\sigma(\Sigma)=\Sigma$ is said to be a {\it two-sided subshift} or a {\it subshift}, for short.

Let $\Sigma$ be a subshift. We say that a finite sequence $(a_1,\ldots,a_m)\in A^m$ is a {\it subword} of $\mathbf{x}\in A^\mathbb{Z}$, and write $(a_1,\ldots,a_m)\sqsubseteq\mathbf{x}$, if there exists $l\in\mathbb{Z}$ such that
$$(\sigma^l(\mathbf{x})_1,\ldots,\sigma^l(\mathbf{x})_m)=(a_1,\ldots,a_m).$$
We define the {\it set of words of length} $m$ as
\begin{equation}\label{eq:L^m}
\mathscr{L}^m(\Sigma):=\{(a_1,\ldots,a_m)\sqsubseteq\mathbf{x}\mid\mathbf{x}\in\Sigma\}\subseteq A^m
\end{equation}
and the {\it complexity function} by
$$
p_\Sigma:\mathbb{N}\to\mathbb{N},\quad p_\Sigma(m):=\left|\mathscr{L}^m(\Sigma)\right|,
$$
where $|\cdot|$ denotes the cardinality of the set.
It is known that one can compute the topological entropy by using the complexity function. Namely, if $\Sigma\subset A^{\mathbb{Z}}$ is a subshift it holds:
$$h(\sigma|_\Sigma)=\lim\limits_{m\to\infty}\frac{\log p_{\Sigma}(m)}{m}$$
(see~\cite{K}).

The same formula holds for the polynomial entropy.

\begin{prop}{\bf [Proposition 3 in~\cite{DKL}.]}\label{prop:complexity-f} Let $\Sigma$ be a subshift. Then 
$$h_{\mathrm{pol}}(\sigma|_\Sigma)=\limsup\limits_{n\to\infty}\frac{\log p_\Sigma(m)}{\log m}.$$
\end{prop}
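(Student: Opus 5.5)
Since $\varepsilon$ is sent to $0$ anyway, the plan is to replace the metric $d_n^\sigma$ by the coding of central blocks: two points of the subshift are $\varepsilon$-close along an orbit exactly when they share a long central word. The claim $h_{\mathrm{pol}}(\sigma|_\Sigma)=\limsup_{m}\log p_\Sigma(m)/\log m$ then reduces to comparing $\mathrm{sep}(n,\varepsilon;\Sigma)$ with $p_\Sigma$ evaluated at $n$ plus a constant.

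First, fix $\varepsilon=2^{-k}$; by monotonicity in $\varepsilon$ it suffices to take $\varepsilon$ of this form. For $\mathbf{x},\mathbf{y}\in\Sigma$, the condition $d(\sigma^j\mathbf{x},\sigma^j\mathbf{y})<2^{-k}$ says that $x_i=y_i$ for all $|i-j|\le k$. Hence $d_n^\sigma(\mathbf{x},\mathbf{y})<2^{-k}$ holds if and only if $\mathbf{x}$ and $\mathbf{y}$ agree on the window $[-k,\,n-1+k]$, which has length $n+2k$. The same identification holds with $\le$ in place of $<$ at the adjacent scale $2^{-k-1}$.

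Next come the two bounds.

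\emph{Upper bound.} In an $(n,2^{-k})$-separated set, any two distinct points must differ somewhere on the window $[-k,\,n-1+k]$. The restriction of each point to this window is a word of length $n+2k$, and by shift invariance it lies in $\mathscr{L}^{n+2k}(\Sigma)$. Therefore $\mathrm{sep}(n,2^{-k};\Sigma)\le p_\Sigma(n+2k)$.

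\emph{Lower bound.} For each word $w$ in $\mathscr{L}^{n+2k'}(\Sigma)$, choose a point of $\Sigma$ that has $w$ on the window $[-k',\,n-1+k']$. This is possible because $\sigma(\Sigma)=\Sigma$ lets us shift an occurrence of $w$ into that window. Two distinct such points differ at some index $i$ with $-k'\le i\le n-1+k'$. Choose $j\in[0,n-1]$ closest to $i$. Then $|i-j|\le k'$, so $d(\sigma^j\mathbf{x},\sigma^j\mathbf{y})\ge 2^{-k'}$. Hence $\mathrm{sep}(n,2^{-k'};\Sigma)\ge p_\Sigma(n+2k')\ge p_\Sigma(n)$, where the last inequality uses that $p_\Sigma$ is nondecreasing: every word extends inside a bi-infinite sequence.

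Finally, take logarithms. Since $\log(n+2k)/\log n\to1$, the $\limsup$ over $n$ of $\log p_\Sigma(n+2k)/\log n$ equals $\limsup_m \log p_\Sigma(m)/\log m$. This gives the same quantity for every $\varepsilon$, so the limit $\varepsilon\to0$ is trivial, and the equality follows.

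The only delicate point is this reindexing $m=n+2k$ inside the $\limsup$, together with the strict versus non-strict inequality in the separation definition. Both are harmless: the first because $\log$ absorbs additive constants, the second because neighbouring dyadic scales only change $k$ by one.
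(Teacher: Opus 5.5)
Your argument is correct, but it follows a different route from the paper. The paper does not prove this proposition; it quotes it from the literature. The only proof it gives is for the more general Theorem~\ref{thm:complexity} in Section~\ref{sec:proof}, and that proof uses open covers. It defines a cover entropy $h^*(f;Y)$ relative to an invariant, possibly non-closed set $Y$. It then shows $h^*=h$ through the sandwich $N(\mathcal{U}^n(Y))\le\mathrm{span}(n,\varepsilon;Y)\le\mathrm{sep}(n,\varepsilon;Y)\le N(\mathcal{V}^n(Y))$, which relies on Lebesgue-number and diameter comparisons. Finally it counts the cylinders of the joined cover $\bigvee_{j<l}\sigma^{-j}\mathcal{V}_n$ that meet $\Sigma$, and identifies this count with a value of $p_\Sigma$.

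You instead work directly with the dynamical metric. Because the paper's convention is that separation means $d_n^\sigma\ge\varepsilon$, your window characterization is exact. Your upper and lower bounds also hold at the same scale, so you have actually proved the identity $\mathrm{sep}(n,2^{-k};\Sigma)=p_\Sigma(n+2k)$. The weakening to $p_\Sigma(n)$, the monotonicity of $p_\Sigma$, and the remark about adjacent dyadic scales are therefore unnecessary.

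Your route is shorter, and it is more general than it looks. You never use that $\Sigma$ is closed, only $\sigma(\Sigma)=\Sigma$, which you need to move an occurrence of a word into the window. So the same few lines prove Theorem~\ref{thm:complexity} for non-closed invariant $\Sigma$, for both topological and polynomial entropy, without the relative open-cover machinery of Section~\ref{sec:proof}. The paper's route follows the standard textbook framework and produces a reusable general statement, $h^*(f;Y)=h(f;Y)$ for invariant non-compact $Y$. For shift spaces, however, it is more work than the direct count.
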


For our purposes, we need slightly more general result concerning the calculation of entropies by using the complexitiy function.

\begin{thrm}\label{thm:complexity}
Let $\Sigma\subset A^\mathbb{Z}$ be a $\sigma$-invariant set, not nessecarily closed, and $p_\Sigma$ defined as above. Then
$$h(\sigma;\Sigma)=\lim\limits_{m\to\infty}\frac{\log p_{\Sigma}(m)}{m},\quad
h_{\mathrm{pol}}(\sigma;\Sigma)=\lim\limits_{m\to\infty}\frac{\log p_{\Sigma}(m)}{\log m},$$
where $p_\Sigma(m)=\left|\mathscr{L}^m(\Sigma)\right|$, and $\mathscr{L}^m(\Sigma)$ is defined as in~(\ref{eq:L^m}).
\end{thrm}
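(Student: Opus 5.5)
The idea is to compare $\mathrm{sep}(\sigma,n,\varepsilon;\Sigma)$ directly with the complexity function $p_\Sigma$, using only the metric $d(\mathbf{x},\mathbf{y})=2^{-k}$ and the invariance $\sigma(\Sigma)=\Sigma$; closedness of $\Sigma$ is never used. Fix $\varepsilon$ and choose $N\ge 0$ with $2^{-N-1}<\varepsilon\le 2^{-N}$. Then $d(\mathbf{x},\mathbf{y})\ge\varepsilon$ holds exactly when $\mathbf{x}$ and $\mathbf{y}$ differ at some coordinate $j$ with $|j|\le N$. Consequently $d_n^\sigma(\mathbf{x},\mathbf{y})\ge\varepsilon$ holds exactly when the central blocks $\mathbf{x}_{[-N,\,n-1+N]}$ and $\mathbf{y}_{[-N,\,n-1+N]}$ differ. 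This block has length $n+2N$.

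For the upper bound, let $E\subset\Sigma$ be $(n,\varepsilon)$-separated. The map $\mathbf{x}\mapsto\mathbf{x}_{[-N,\,n-1+N]}$ is injective on $E$, and its image lies in $\mathscr{L}^{n+2N}(\Sigma)$, since each such block is a subword of an element of $\Sigma$. Hence $\mathrm{sep}(n,\varepsilon;\Sigma)\le p_\Sigma(n+2N)$.

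For the lower bound, take any word $w\in\mathscr{L}^{n+2N}(\Sigma)$. By definition $w$ occurs in some $\mathbf{x}\in\Sigma$ at some position. Applying a suitable power of $\sigma$, and using that $\Sigma$ is $\sigma$-invariant in both directions because $\sigma(\Sigma)=\Sigma$, we obtain $\mathbf{y}\in\Sigma$ whose block at $[-N,\,n-1+N]$ is exactly $w$. Choosing one such $\mathbf{y}$ for each $w$ gives an $(n,\varepsilon)$-separated set in $\Sigma$. Therefore
$$p_\Sigma(n+2N)\le \mathrm{sep}(n,\varepsilon;\Sigma)\le p_\Sigma(n+2N),$$
so the two quantities are equal for every $n$.

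This shift step is the main point requiring care. It is the only place where invariance enters, and it is exactly what replaces compactness in the classical argument.

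To conclude, divide by $n$, respectively by $\log n$, and let $n\to\infty$. Since $N$ is fixed, $(n+2N)/n\to1$ and $\log(n+2N)/\log n\to1$. So for each fixed $\varepsilon$ the $\limsup$ over $n$ equals the corresponding $\limsup_{m}$ of $\log p_\Sigma(m)/m$, respectively of $\log p_\Sigma(m)/\log m$, and this value does not depend on $\varepsilon$. Hence the limit as $\varepsilon\to0$ is trivial and both formulas hold.

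Two remaining points concern existence of the limits:
\begin{itemize}
\item In the exponential case, $\mathscr{L}^{m+l}(\Sigma)$ injects into $\mathscr{L}^m(\Sigma)\times\mathscr{L}^l(\Sigma)$ by splitting a word, so $p_\Sigma$ is submultiplicative. Fekete's lemma then shows that $\lim_m\log p_\Sigma(m)/m$ exists.
\item In the polynomial case no such lemma is available. The limit in the statement should be read as a $\limsup$, consistent with Proposition~\ref{prop:complexity-f}, and the argument above proves exactly that.
\end{itemize}
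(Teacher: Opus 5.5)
Your proof is correct, and it takes a more direct route than the paper's.

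The paper works through open covers. It defines a relative entropy $h^*(\sigma;\Sigma)$ using covers restricted to $\Sigma$. It shows that $h^*$ can be computed along any sequence of covers whose diameters tend to $0$. It then proves $h^*=h$ for non-closed invariant sets, using inequalities between spanning sets, separated sets and subcovers. Finally it evaluates the refined cylinder cover $\bigvee_{j=0}^{l-1}\sigma^{-j}\mathcal{V}_n$ on $\Sigma$, whose minimal subcover has $p_\Sigma(2n+1+l)$ elements up to an index shift.

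You instead use the fact that balls of the shift metric are cylinders. So $(n,\varepsilon)$-separation means exactly ``differing on the window $[-N,n-1+N]$''. This gives the exact identity $\mathrm{sep}(n,\varepsilon;\Sigma)=p_\Sigma(n+2N)$, with no approximation in $\varepsilon$ and no cover machinery.

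\textbf{What your version buys.}
\begin{itemize}
\item It is shorter and fully elementary.
\item It isolates the one place where $\sigma(\Sigma)=\Sigma$ is needed: shifting an occurrence of a word into the central window. This must be two-sided invariance, not merely $\sigma(\Sigma)\subseteq\Sigma$. Take the forward orbit of a point whose left half contains every finite word and whose right half is all zeros. Its complexity grows like $2^m$, yet its entropy is $0$.
\item The paper uses the same shifting step, but only implicitly, when it identifies the cylinders meeting $\Sigma$ with words in $\mathscr{L}(\Sigma)$.
\end{itemize}

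\textbf{What the paper's route buys.} It gives a general open-cover description of $h(f;Y)$ and $h_{\mathrm{pol}}(f;Y)$ for arbitrary invariant, non-closed subsets of compact metric spaces, which is not specific to shifts.

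\textbf{Your closing remarks.} Both improve on the paper.
\begin{itemize}
\item Submultiplicativity plus Fekete's lemma justifies the $\lim$ in the exponential case. The paper does not address this.
\item The paper's own argument yields only a $\limsup$ in the polynomial case. Reading the second formula with $\limsup$, as in Proposition~\ref{prop:complexity-f}, is the right interpretation.
\end{itemize}
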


Since the proof of Theorem~\ref{thm:complexity} is somewhat technical and follows the proof of the case when $\Sigma$ is a subshift (i.e., compact and $\sigma$-invariant), we postpone it to Section~\ref{sec:proof}.

\section{Polynomial entropy for induced hyperspace of a star}

We now restate and prove one of our main results (Theorem~\ref{thm:star1}).

\begin{thrm}\label{thrm:star}
Let $X=S_k$ be a star with $k$ edges and $f:X\to X$ a homeomorphism. 
If $$X=\bigcup_{j=1}^k I_j$$ where $I_j$ is homeomorphic to $[0,1]$ and every $I_j$ possesses a wandering point, then $h_{\mathrm{pol}}(C(f))=k$.
\end{thrm}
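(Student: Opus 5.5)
We prove the two inequalities $h_{\mathrm{pol}}(C(f))\ge k$ and $h_{\mathrm{pol}}(C(f))\le k$ separately, coding the dynamics on each edge by symbols so that Theorem~\ref{thm:complexity} and Proposition~\ref{prop:lip} apply. First some preliminary reductions. Since $f$ is a homeomorphism of $S_k$, it fixes the branch point $v$ and permutes the edges $I_1,\dots,I_k$. Replacing $f$ by $f^N$ does not change $h_{\mathrm{pol}}$ (the power property recalled in Subsection~\ref{subsec:entropy}), and $C(f^N)=C(f)^N$. Hence we may assume that $f(I_j)=I_j$ for all $j$ and that $f$ fixes every endpoint. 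Each $f|_{I_j}$ is then an increasing homeomorphism of $[0,1]$ with $0\leftrightarrow v$. Its fixed point set $F_j$ is closed, and on each complementary interval all points are wandering and move monotonically from one fixed endpoint to the other. The hypothesis says that each $F_j\neq I_j$.

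\textbf{Upper bound.} Every $K\in C(S_k)$ is a subtree. Either $K$ lies in a single edge, or $K=\bigcup_{j\in J}[v,b_j]$ with $b_j\in I_j$. Accordingly, $C(S_k)$ is a finite union of the closed invariant sets $C(I_j)$ and $\mathcal{A}:=\{K\ni v\}$. The map $K\mapsto (b_1,\dots,b_k)$, where $b_j=v$ if $K\cap I_j=\{v\}$, is a conjugacy between $C(f)|_{\mathcal{A}}$ and $f|_{I_1}\times\cdots\times f|_{I_k}$. This uses the assumption that the edges are invariant. For a homeomorphism $g$ of $[0,1]$ we have $h_{\mathrm{pol}}(g)\le 1$; this is classical (Labrousse), or it can be obtained directly by coding. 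By the product formula, $h_{\mathrm{pol}}(C(f)|_{\mathcal{A}})\le k$. Next, $C(I_j)$ is the set of arcs $[a,b]$, and $(a,b)\mapsto[a,b]$ is a factor map from $\{a\le b\}\subset I_j^2$. So $h_{\mathrm{pol}}(C(f)|_{C(I_j)})\le 2h_{\mathrm{pol}}(f|_{I_j})\le 2\le k$ when $k\ge 2$. A star has $k\ge 3$, so this suffices. The finite union property~\eqref{eq:finite-union} then gives $h_{\mathrm{pol}}(C(f))\le k$.

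\textbf{Lower bound.} This is the main step. By the conjugacy above it suffices to show $h_{\mathrm{pol}}(f|_{I_j})\ge 1$ on an invariant piece, and then take products. Choose a wandering interval $(p_j,q_j)\subset I_j\setminus F_j$ with $p_j,q_j$ fixed. Pick a fundamental domain $D_j=[x_j,f(x_j))$ inside it, and code each point $y\in(p_j,q_j)$ by the sequence $\omega_n=\mathbf 1[f^n(y)\in D_j]$. Every such sequence has exactly one symbol $1$. Points in $\{p_j,q_j\}$ get the zero sequence. This gives an invariant, non-closed set $\Sigma_1=\{0^\infty\}\cup\{\text{sequences with a single }1\}\subset\{0,1\}^{\mathbb Z}$, and $p_{\Sigma_1}(m)=m+1$.

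For the product we use $\Sigma=\Sigma_1^{k}\subset(\{0,1\}^k)^{\mathbb Z}$, with $p_\Sigma(m)=(m+1)^k$. Theorem~\ref{thm:complexity} then gives $h_{\mathrm{pol}}(\sigma;\Sigma)=k$. To transfer this to $C(f)$ we cannot apply Proposition~\ref{prop:lip} in the direction from coding to space, because the coding map is not continuous. Instead we directly build separated sets in $\mathcal{A}$. For $0\le t_1,\dots,t_k<n$, let $K_{t}=\bigcup_j[v,f^{-t_j}(x_j')]$, where $x_j'$ is a fixed interior point of $D_j$. Two such sets with $t\ne t'$ differ in some coordinate $j$. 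At time $s=t_j$ one of them has its $j$-th endpoint at $x_j'$, while the other has it at a point of another fundamental domain $f^{t_j-t'_j}(D_j)$. Their Hausdorff distance is therefore at least a fixed $\varepsilon_0>0$, namely the distance from $x_j'$ to the boundary of $D_j$. This yields $\mathrm{sep}(n,\varepsilon_0)\ge n^k$, so $h_{\mathrm{pol}}(C(f))\ge k$.

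The delicate point is making this separation uniform. The fundamental domains accumulate at fixed points, so we must compare the two sets at a time when one endpoint is exactly in $D_j$. The Hausdorff distance is bounded below by the distance between the endpoints $b_j$ on the $j$-th edge, since the other edges are disjoint from $I_j$ away from $v$. We also need to ensure that the intermediate iterates keep the endpoints inside $(p_j,q_j)$; this holds automatically because the interval is invariant.
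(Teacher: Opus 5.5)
Your proof is correct, and it differs from the paper's mainly in the lower bound.

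\textbf{Lower bound.} The paper codes finite sets of points on $k$ wandering orbits by a discontinuous map $\varphi$ into $(\{0,1\}^k)^{\mathbb{Z}}$. It pulls back $(\sigma,n,\varepsilon)$-separated subsets of the invariant set $\Sigma=\varphi(Y_E)$ to $(2^f,n,\delta)$-separated subsets of the endpoint space $Y_E$. It then evaluates $h_{\mathrm{pol}}(\sigma;\Sigma)=k$ through the complexity function (Theorem~\ref{thm:complexity}) and transfers the bound to the stars $Y$ via the conjugacy $K\mapsto E(K)$. You bypass all of this. The $n^k$ stars $K_t=\bigcup_j[v,f^{-t_j}(x_j')]$, $0\le t_j<n$, are directly $(n,\varepsilon_0)$-separated: at time $t_j$ one $j$-th endpoint sits at $x_j'$ and the other lies in a different fundamental domain. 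Consequently your coding paragraph and the appeal to Theorem~\ref{thm:complexity} are never used and can be dropped. (Incidentally, $\Sigma_1$ as you describe it is closed, not non-closed.)

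\textbf{Upper bound.} Your decomposition $C(S_k)=\mathcal{A}\cup\bigcup_j C(I_j)$ uses $\mathcal{A}=\{K\ni v\}$, which is exactly conjugate to $f|_{I_1}\times\cdots\times f|_{I_k}$ once you allow $b_j=v$. This is cleaner than the paper's $Y\cup\overline{C(X)\setminus Y}$ together with the factor map $(x_1,\dots,x_k)\mapsto\{x_1,\dots,x_k\}$ onto $Y_E$. Your estimate $h_{\mathrm{pol}}(C(f)|_{C(I_j)})\le 2$, via $f\times f$ on $\{a\le b\}$, replaces the cited interval result. Both versions rest on Labrousse's $h_{\mathrm{pol}}(f|_{I_j})\le 1$, and both need only the subadditive direction of the product formula.

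\textbf{Trade-off.} The paper's route buys uniformity with the manifold theorem, where the same coding on non-closed sets such as $F_\omega$ is reused. Yours buys a short, self-contained and fully explicit proof.

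\textbf{One detail to state.} The choice $\varepsilon_0=\min_j\mathrm{dist}(x_j',\partial D_j)$ and the inequality $d_H(K,K')\ge d(b_j,b_j')$ are valid for the arc-length metric on $S_k$. You may use that metric because $h_{\mathrm{pol}}$ does not depend on the choice of compatible metric. For an arbitrary metric, compactness still yields a uniform $\varepsilon_0>0$.
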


\noindent{\it Proof.} Suppose that $k>2$, since when $k\in\{1,2\}$, $S_k$ is homeomorphic to the interval, and this case is covered in~\cite{DK}. Let us denote by $b$ the branch point of $X$. Since $f$ is a homeomorphism, and $k>2$, $f(b)=b$ and for every $i\in\{1,\ldots,k\}$, there exists $j\in\{1,\ldots,k\}$ such that $f(I_i)=I_j$.

\vspace{1mm}
\noindent{\bf Step I.} There exists $m\in\mathbb{N}$ such that$f^m(I_j)=I_j$ for $j\in\{1,\ldots,k\}$. Indeed, since there are finitely many $I_j$'s, there exists $r>n$ such that $f^r(I_1)=f^n(I_1)$. We conclude that $I_{1}=f^{r-n}(I_1)$. Denote by 
$$X':=X\setminus(I_1\cup f(I_1)\cup\ldots\cup f^{r-n-1}(I_1))\,\cup\,\{b\},\quad f':=f^{r-n}.$$ We repeat the same argument for $f':X'\to X'$ and, after finitely many step, we get the desired $m$.

\vspace{1mm}
\noindent{\bf Step II.} Every $I_j$ has a wandering point of $f^m$. Indeed, we can identify $f^m|_{I_j}$ with a homeomorphism
$$g:[0,1]\to [0,1],\quad g(0)=0,$$ which is obviously strictly increasing map. If there exists $x\in [0,1]$ with $f(x)\neq x$, than the sequence $f^n(x)$ is strictly increasing if $f(x)>x$, and strictly decreasing if $f(x)<x$. In both cases it is easy to show that $x$ is wandering for $f^m$. If $g(x)=x$ for all $x\in[0,1]$ then $f^m|_{I_j}=\mathrm{Id}$, implying that $I_j$ does not have a wandering point for $f$, which is impossible by the assumptions of Theorem.

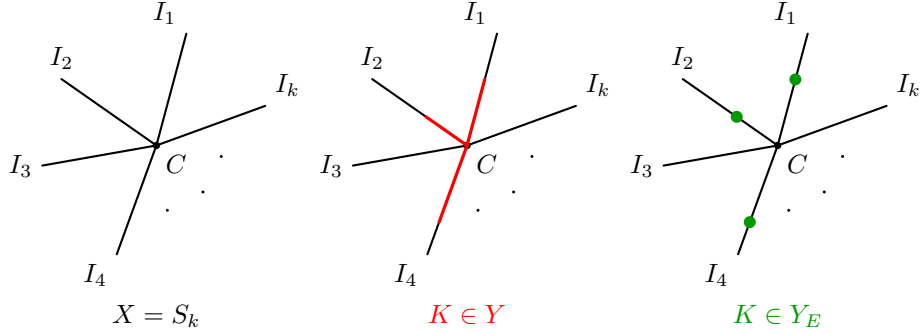
\begin{figure}[ht]
\centering
\begin{tikzpicture}[
  scale=0.70, line cap=round, line join=round,
  brush/.style={thick},
  redseg/.style={very thick, red},
  gdot/.style={circle, fill=green!60!black, inner sep=1.6pt}
]

\def\L{2.2}
\def\xsep{5.9}
\def\xoff{-2.2}
\def\ycap{-2.85}

\def\aA{105}
\def\aB{35}
\def\aC{-10}
\def\aD{-70}
\def\aE{160}

\pgfmathsetmacro{\bA}{180-\aA}
\pgfmathsetmacro{\bB}{180-\aB}
\pgfmathsetmacro{\bC}{180-\aC}
\pgfmathsetmacro{\bD}{180-\aD}
\pgfmathsetmacro{\bE}{180-\aE}

\pgfmathsetmacro{\xone}{\xoff+\xsep}
\pgfmathsetmacro{\xtwo}{\xoff+2*\xsep}

\def\rEll{1.25}
\pgfmathsetmacro{\diffBE}{\bE-\bD-360}

\newcommand{\DrawBrushBase}{%
  
  \fill (0,0) circle (2pt) node[below right] {$C$};

  \draw[brush] (0,0) -- ({\L*cos(\bA)},{\L*sin(\bA)}) node[above left] {$I_1$};
  \draw[brush] (0,0) -- ({\L*cos(\bB)},{\L*sin(\bB)}) node[above]      {$I_2$};
  \draw[brush] (0,0) -- ({\L*cos(\bC)},{\L*sin(\bC)}) node[left]       {$I_3$};
  \draw[brush] (0,0) -- ({\L*cos(\bD)},{\L*sin(\bD)}) node[below left] {$I_4$};
  \draw[brush] (0,0) -- ({\L*cos(\bE)},{\L*sin(\bE)}) node[above right]{$I_k$};

  \foreach \t in {0.44,0.50,0.56}{
    \pgfmathsetmacro{\ang}{\bD + \t*\diffBE}
    \fill ({\rEll*cos(\ang)},{\rEll*sin(\ang)}) circle (0.8pt);
  }%
}

% 1) X = S_k
\begin{scope}[xshift=\xoff cm]
  \DrawBrushBase
  \node[below] at (0,\ycap) {$X=S_k$};
\end{scope}

% 2) K in Y
\begin{scope}[xshift=\xone cm]
  \DrawBrushBase
  \draw[redseg] (0,0) -- ({1.30*cos(\bA)},{1.30*sin(\bA)});
  \draw[redseg] (0,0) -- ({0.95*cos(\bB)},{0.95*sin(\bB)});
  \draw[redseg] (0,0) -- ({1.55*cos(\bD)},{1.55*sin(\bD)});
  \node[below, text=red] at (0,\ycap) {$K\in Y$};
\end{scope}

% 3) K in {Y}_E
\begin{scope}[xshift=\xtwo cm]
  \DrawBrushBase
  \node[gdot] at ({1.30*cos(\bA)},{1.30*sin(\bA)}) {};
  \node[gdot] at ({0.95*cos(\bB)},{0.95*sin(\bB)}) {};
  \node[gdot] at ({1.55*cos(\bD)},{1.55*sin(\bD)}) {};
  \node[below, text=green!60!black] at (0,\ycap) {$K\in {Y}_E$}; % <-- green text
\end{scope}

\end{tikzpicture}
\caption{ $S_k$, a typical element in $Y$, a typical element in ${Y}_E$.}
\end{figure}

\vspace{1mm}
\noindent{\bf Step III.} 
Since 
$$h_{\mathrm{pol}}(C(f))=h_{\mathrm{pol}}(C(f)^m)=h_{\mathrm{pol}}(C(f^m)),$$ we will assume that 
\begin{equation}\label{eq:f(I)=I}
f(I_j)=I_j
\end{equation}
for $j\in\{1,\ldots,k\}$. Denote by $Y$ the set of all stars contained in $X$ with endpoints on the edges of $X$, i.e.,
\begin{equation}\label{eq:Y}
Y:=\left\{K\in C(X)\middle |\,
\begin{aligned}
&E(K)=\{x_{1},\ldots,x_{l}\},\quad\mbox{where}\\
&x_{j}\in I_{n_j},\; n_j\in\{1,\ldots,k\},\quad n_j\neq n_i,\;\mbox{for}\,j\neq i;\\
&l\in\{2,\ldots,k\}
\end{aligned}\right\}
\end{equation}
and by
\begin{equation}\label{eq:tildeY}
{Y}_E:=\{E(K)\mid K\in Y\}\subset F_k(X),
\end{equation} see Figure 1. It is obvious that $Y$ is a closed subset of $C(X)$ and ${Y}_E$ is a closed subset of $F_k(X)$. From~(\ref{eq:f(I)=I}) it follows that they also are $2^f$-invariant. 
Note that $Y$ and ${Y}_E$ are homeomorphic subspaces of $2^X$, since
\begin{equation}\label{eq:psi}
\psi:Y\to{Y}_E,\quad\psi:K\mapsto E(K)
\end{equation} is a homeomorphism with respect to the Hausdorff topology, and that $\psi$ also establishes the conjugacy between $2^f|_Y$ and $2^f|_{Y_E}$. We therefore conclude
$$h_{\mathrm{pol}}(C(f))=h_{\mathrm{pol}}\left(2^f|_{C(X)}\right)\ge h_{\mathrm{pol}}\left(2^f|_{Y}\right)= h_{\mathrm{pol}}\left(2^f|_{Y_E}\right).$$

\vspace{1mm}
\noindent{\bf Step IV.} We prove that $h_{\mathrm{pol}}\left(2^f|_{{Y}_E}\right)\ge k$, and conclude
\begin{equation}\label{eq:ge}
h_{\mathrm{pol}}(C(f))\ge k.
\end{equation}
Denote by $x^j\in I_j$ any wandering point and by $x_n^j:=f^n(x^j)$, for $j\in\{1,\ldots,k\}$, $n\in\mathbb{Z}$. Define the map:
$$\varphi:2^X\to\left(\{0,1\}^k\right)^\mathbb{Z},\quad \varphi(K):=\{u_n^j\},$$ for
$j\in\{1,\ldots,k\}$, $n\in\mathbb{Z}$, where 
$$u_n^j:=\begin{cases} 1, &x_n^j\in K,\\
0, &x_n^j\notin K.\end{cases}$$ Since $x_n^j=f^n(x_j)$, it folllows that
\begin{equation}\label{eq:conjugacy}
\varphi\circ 2^f=\sigma\circ\varphi,
\end{equation}
where 
$$\sigma:\left(\{0,1\}^k\right)^\mathbb{Z}\to\left(\{0,1\}^k\right)^\mathbb{Z}$$ is the shift map defined by
\begin{equation}\label{eq:shift-k}
\sigma(\{b_n^j\}):=\{b_{n+1}^j\},\quad j\in\{1,\ldots,k\},\;n\in\mathbb{Z}.
\end{equation}
The space $\left(\{0,1\}^k\right)^\mathbb{Z}$ can be identified with the space 
$$\{\alpha_1,\ldots,\alpha_{2^k}\}^\mathbb{Z}$$ of two-sided sequences over the alphabet $\{\alpha_1,\ldots,\alpha_{2^k}\}$, equipped with the standard shift map~(\ref{eq:shift-A}), here explicitly defined by~(\ref{eq:shift-k}). We can think of $\alpha_j$ as a $k$-tuple $(s_1,\ldots,s_k)$ where $s_j\in\{0,1\}$.

Denote by $\Sigma:=\varphi({Y}_E)$.  

The set $\Sigma$ consists of all $k$-tuples $(\{v_n^1\},\ldots,\{v_n^k\})$ of sequences such that $v_n^j$ is equal to one for at most one $n\in\mathbb{Z}$, for $j\in\{1,\ldots,k\}$.

We want to show that
\begin{equation}\label{eq:<=}
h_{\mathrm{pol}}\left(2^f;{{Y}_E}\right)\ge h_{\mathrm{pol}}(\sigma;{\Sigma}).
\end{equation}
We cannot simply use the conjugacy~(\ref{eq:conjugacy}) and the fact that $(\sigma;{\Sigma})$ is a factor of $(2^f;{{Y}_E})$ because the map $\varphi$ is not continuous (indeed, to see this take $K_1:=\{a_0^1\}=\{x_1\}$ and $K_2$ to be any singleton $\{x\}$ where $I_1\ni x\neq x^1$ is close enough to $x^1$). However, we will show that~(\ref{eq:<=}) nevertheless holds.

Fix $\varepsilon>0$. Choose $m\in\mathbb{N}$ such that for any two sequences $\{s_n\}, \{t_n\}\in\{0,1\}^\mathbb{Z}$,
$$d(\{s_n\},\{t_n\})<\varepsilon\;\Leftrightarrow\;s_r=t_r\;\mbox{for all}\;r\in[-m,m].$$ Define 
\begin{equation}\label{eq:delta}
\delta:=\min\{d(x^i_r,x^j_l)\mid r,l\in[-m,m],\;i,j\in\{1,\ldots,k\},\;i\neq j\}.
\end{equation}
Since the points $x^j$ are wandering, and belong to different $I_j$'s, $\delta>0$.

Now take $n\in\mathbb{N}$ and let $E=\{\mathbf{a^1},\ldots,\mathbf{a}^N\}\subset\Sigma$ be a $(\sigma,n,\varepsilon)$-separated set in $\Sigma$. Define
$$K_j:=\{x^l_r\mid(\mathbf{a}^j)^l_r=1\}.$$ From the definitions of ${Y}_E$ and $\Sigma$ we see that $K_j$ is indeed an element of ${Y}_E$.
It is easy to see that the set $\{K_1,\ldots,K_N\}$ is $(2^f,n,\delta)$-separated: for $i,j\in\{1,\ldots,N\}$, since $E$ is $(\sigma,n,\varepsilon)$-separated,
there exists $l\in\{0,\ldots,n-1\}$ such that 
$$d(\sigma^l(\mathbf{a}^i),\sigma^l(\mathbf{a}^j))>\varepsilon.$$ 
By the choice of $m$, this means that there exists $r\in[-m,m]$ such that
$$\sigma^l(\mathbf{a}^i)_r\neq\sigma^l(\mathbf{a}^j)_r,$$
or, equivanently, $a_{r+l}^i\neq a_{r+l}^j$. By definitions of $K_i$ and $K_j$, this implies that one of the sets $(2^f)^l(K_i)$, $(2^f)^l(K_j)$ contains the point 
$$x\in\{x^l_r\mid r\in[-m,m],\,l\in\{1,\ldots,k\}\},$$ while the other does not. Since the orbit points $x^l_r$, for $r\in[-m,m]$, $l\in\{1,\ldots,k\}$ are pairwise at least $\delta>0$ apart, by construction, we obtain
$$d\left((2^f)^l(K_i),(2^f)^l(K_j)\right)>\delta.$$
Therefore
$$\mathrm{sep}(2^f,n,\delta;{Y}_E)\ge\mathrm{sep}(\sigma,n,\epsilon;\Sigma).$$ We have that
$$\begin{aligned}
&h(2^f;{Y}_E)=\lim_{\delta\to 0}\limsup_{n\to\infty}\frac{\log\mathrm{sep}(2^f,n,\delta;{Y}_E)}{n}\ge\\
&\limsup_{n\to\infty}\frac{\log\mathrm{sep}(2^f,n,\delta;{Y}_E)}{n}\ge
\limsup_{n\to\infty}\frac{\log\mathrm{sep}(\sigma,n,\varepsilon;\Sigma)}{n}
\end{aligned}$$ and this is true for every $\varepsilon\ge 0$. Therefore we have
$$h(2^f;{Y}_E)\ge h(\sigma;\Sigma).$$

Finally, in order to apply Theorem~\ref{thm:complexity}, we need to compute the complexity function $p_\Sigma(n)$.
 
Take a point 
$$K=\{y^1,\ldots,y^k\}\in{Y}_E.$$ Then $a_n^j\in K$ if and only if $a_n^j=y^j$. Therefore 
$$\varphi(K)=u_n^j=\begin{cases}0, &f^n(x^j)\neq y^j,\\1, &f^n(x^j)= y^j.\end{cases}$$ Since the point $x^j$ is wandering, $u_n^j$ is equal to $1$ at most once, for $j\in\{1,\ldots,k\}$. Hence $p_\Sigma(n)=n^k$ and, therefore
$$h_{\mathrm{pol}}(C(f))\ge\lim_{n\to\infty}\frac{\log p_\Sigma(n)}{\log n}= k.$$

\vspace{1mm}
\noindent{\bf Step V.} We prove that 
\begin{equation}\label{eq:le}
h_{\mathrm{pol}}(C(f))\le k.
\end{equation}
We still assume that $f(I_j)=I_j$.
The set $C(X)$ can be divided into two closed and $C(f)$-invariant subsets, namely
$$C(X)=Y\cup \overline{C(X)\setminus Y},$$
where $Y$ is defined in~(\ref{eq:Y}). The elements of $\overline{C(X)\setminus Y}$ are precisely the intervals whose endpoints lie in the same edge, including singletons. It holds that
$$Y\cap \overline{C(X)\setminus Y}=\{\{b\}\}.$$
We have
$$h_{\mathrm{pol}}(C(f))=\max\left\{h_{\mathrm{pol}}(C(f)|_Y),h_{\mathrm{pol}}\left(C(f)|_{\overline{C(X)\setminus Y}}\right)\right\},$$
by~(\ref{eq:finite-union}). It holds that
$$h_{\mathrm{pol}}\left(C(f)|_{\overline{C(X)\setminus Y}}\right)=2.$$ 
Indeed
$$h_{\mathrm{pol}}\left(C(f)|_{\overline{C(X)\setminus Y}}\right)=\max_{1\le j\le k}\{h_{\mathrm{pol}}(C(f|_{I_J}))\}$$ and
$$h_{\mathrm{pol}}(C(f|_{I_J}))=2$$
(see~\cite{DK}). Therefore, it remains to show that
$$f_{\mathrm{pol}}(C(f)|_Y)\le k.$$

%$$C(X)=Y\cup\bigcup_{j=1}^kY_j,$$ where 
%$$Y_j:=\{K\in C(f)\mid K\subset I_j\}.$$ Therefore
%$$h_{\mathrm{pol}}(C(f))\le\max\{h_{\mathrm{pol}}(C(f))|_{Y},h_{\mathrm{pol}}(C(f))|_{Y_1},\ldots,h_{\mathrm{pol}}(C(f))|_{Y_k}.$$
%We know from~\cite{DK} that
%$$h_{\mathrm{pol}}(C(f))|_{Y_1}=\ldots,h_{pol=}(C(f))|_{Y_k}=2.$$
We can construct a continuous surjections
$$\pi:I_1\times\ldots\times I_k\to {Y}_E$$ 
(where ${Y}_E$ is defined in~(\ref{eq:Y})) as:
$$\pi:(x_1,\ldots,x_k)\mapsto\{x_1,\ldots,x_k\}.$$ 
It is obvious that 
$$\pi\circ (f|_{I_1}\times\ldots\times f|_{I_k})=2^f\circ\pi,$$ 
so
$$h_{\mathrm{pol}}(2^f|_{{Y}_E})\le h_{\mathrm{pol}}(f|_{I_1}\times\ldots\times f|_{I_k}).$$
We already know from~(\ref{eq:psi}) that
$$h_{\mathrm{pol}}(2^f|_Y)=h_{\mathrm{pol}}(2^f|_{{Y}_E})$$
so
$$h_{\mathrm{pol}}(C(f)|_Y)=h_{\mathrm{pol}}(2^f|_Y)\le h_{\mathrm{pol}}(f|_{I_1}\times\ldots\times f|_{I_k})=h_{\mathrm{pol}}(f|_{I_1})+\ldots+h_{\mathrm{pol}}(f|_{I_k}).$$
It remains to note that $h_{\mathrm{pol}}(f|_{I_j})=1$, which is precisely the content of Lemma 3.1 in~\cite{L}.\qed

\begin{rem}
Note that in the proof of the previous theorem we used the fact that $x^j$ is wandering only to prove $\delta>0$ in~(\ref{eq:delta}) and in the computation of $p_\Sigma$. The same result could be obtained under the assumption that $f^m(x^j)\neq f^m(x^j)$ for all $m,n\in\mathbb{Z}$, $m\neq n$ and all $j\in\{1,\ldots,k\}$. However, these two assumtion are equivalent in the case of a homeomoprhism of an interval, which $f^m|_{I_j}$ is.
\end{rem}

\medskip

We have the following two corollaries.

\begin{cor}
Let $X$ be as in Theorem~\ref{thrm:star} and $f:X\to X$ a homeomorphism such that there are exactly $l\le k$ edges $I_j$ that possess a wandering point. Then
$h_{\mathrm{pol}}(C(f))=\max\{l,2\}$ if $l\ge 1$, and $h_{\mathrm{pol}}(C(f))=0$ if $l=0$.
\end{cor}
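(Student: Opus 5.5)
First I reduce to $f(I_j)=I_j$ for all $j$. Step I in the proof of Theorem~\ref{thrm:star} gives $m$ with $f^m(I_j)=I_j$, and $h_{\mathrm{pol}}(C(f))=h_{\mathrm{pol}}(C(f^m))$. Whether an edge contains a wandering point is the same for $f$ and $f^m$: by Step II, $I_j$ has no wandering point of $f^m$ iff $f^m|_{I_j}=\mathrm{Id}$, and in that case $I_j$ has no wandering point of $f$ either. So I may assume $f(I_j)=I_j$. Let $J\subset\{1,\ldots,k\}$ be the set of $l$ indices whose edges have a wandering point, so that $f|_{I_j}=\mathrm{Id}$ for $j\notin J$.

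\emph{Case $l=0$.} Here $f=\mathrm{Id}$, so $C(f)=\mathrm{Id}$ and $\mathrm{sep}(n,\varepsilon)$ does not depend on $n$. Hence $h_{\mathrm{pol}}(C(f))=0$.

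\emph{Upper bound for $l\ge1$.} As in Step V, decompose $C(X)=Y\cup\overline{C(X)\setminus Y}$. By~(\ref{eq:finite-union}), the second piece contributes $\max_j h_{\mathrm{pol}}(C(f|_{I_j}))\le 2$, using~\cite{DK} for $j\in J$ and the value $0$ for identity edges. For $Y$, use the conjugacy $\psi$ to $Y_E$ and the semiconjugacy $\pi$ from $I_1\times\cdots\times I_k$. The product formula gives
$$h_{\mathrm{pol}}(C(f)|_Y)\le\sum_j h_{\mathrm{pol}}(f|_{I_j})=l,$$
since $h_{\mathrm{pol}}(f|_{I_j})=1$ for $j\in J$ by Lemma 3.1 in~\cite{L} and $0$ otherwise. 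Together these give $h_{\mathrm{pol}}(C(f))\le\max\{l,2\}$.

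\emph{Lower bound for $l\ge1$.} Taking some $j\in J$, the subcontinua of $I_j$ form a closed invariant subset on which $C(f)$ is $C(f|_{I_j})$. Since $f|_{I_j}$ is an interval homeomorphism that is not the identity, \cite{DK} gives $h_{\mathrm{pol}}(C(f))\ge 2$. For the bound $\ge l$ (which only matters when $l\ge3$), I repeat Step IV using only the edges in $J$. Let $Y^J\subset Y$ consist of stars with endpoints in $\bigcup_{j\in J}I_j$ and at most one endpoint per edge; it is closed and invariant. Map it to $(\{0,1\}^l)^{\mathbb Z}$ via wandering points $x^j$, $j\in J$. The image $\Sigma$ consists of $l$-tuples of sequences, each with at most one $1$, and has complexity of order $n^l$. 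The separation argument with $\delta$ as in~(\ref{eq:delta}) transfers verbatim, and Theorem~\ref{thm:complexity} yields $h_{\mathrm{pol}}(C(f))\ge l$.

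The main obstacle is verifying that this construction still produces elements of $Y^J$ when $l<k$. The sets $K_j$ built from separated words may contain just one point, or none, and then fail to be stars with at least two endpoints. I would handle this by fixing one identity edge's endpoint or the branch point $b$ as an extra constant piece, or simply by restricting to words in which every one of the $l$ coordinates has a $1$. The latter still has complexity of order $n^l$, which is enough for the polynomial rate. For $l=2$ the bound $\ge 2$ already comes from the interval case, so no extra care is needed there.
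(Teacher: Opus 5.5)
Your argument is correct in substance, but it is organized differently from the paper's.

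\textbf{How the routes differ.} The paper passes to $f^m$ and splits $X=A\cup B$, with $A$ the union of the $l$ wandering edges and $B$ the union of the identity edges. It then writes $h_{\mathrm{pol}}(C(f))=\max\{h_{\mathrm{pol}}(C(f)|_A),h_{\mathrm{pol}}(C(f)|_B)\}$, discards $B$, and applies Theorem~\ref{thrm:star} to the substar $A$ as a black box. This is short, but the first equality is not an instance of~(\ref{eq:finite-union}). Indeed $C(X)\neq C(A)\cup C(B)$: an arc through $b$ from a point of a wandering edge to a point of an identity edge lies in neither.

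Your upper bound deals with exactly these mixed continua. The product bound over all $k$ edges covers every continuum through $b$, and the identity factors contribute $0$.

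For the lower bound, by contrast, the paper's route is the economical one. $C(A)$ is a closed $C(f)$-invariant subset of $C(X)$, so Theorem~\ref{thrm:star} (for $l\ge 3$) and \cite{DK} (for $l\le 2$) give it without rerunning Step IV. The defect you noticed there (sets $K_j$ with fewer than two points, or empty) is a defect of Step IV itself, present already for $l=k$. Restricting to words with exactly one $1$ in each row repairs it without changing the complexity $(n+1)^l$.

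\textbf{Borrowed steps to adjust.} Three of them need fixing, though none is serious.

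(i) For $k\ge 3$, the map $\pi(x_1,\ldots,x_k)=\{x_1,\ldots,x_k\}$ is not onto $Y_E$. For an arc $[x_1,x_2]$ through $b$, every choice of $x_3\in I_3$ adds a point to $\{x_1,x_2\}$. Also, $\psi$ is discontinuous where an arm of a star shrinks to $b$. Use instead $\Pi(x_1,\ldots,x_k)=\bigcup_j[b,x_j]$. This is a continuous semiconjugacy from $f|_{I_1}\times\cdots\times f|_{I_k}$ onto the closed invariant set of continua containing $b$, and together with $\bigcup_j C(I_j)$ it covers $C(X)$.

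(ii) The $\delta$ of~(\ref{eq:delta}) does not transfer verbatim, because it only separates window points lying on different orbits. You also need each $x^p_r$ with $|r|\le m$ to stay a fixed distance from $\{x^p_s\mid s\neq r\}$; this holds because the orbit is monotone in $I_p$. You must also pass from separated endpoint sets to separated stars. You can avoid the coding entirely: $\Pi$ restricted to $\prod_{p\in J}I_p$ (remaining coordinates equal to $b$) is injective. Hence it conjugates $\prod_{p\in J}f|_{I_p}$ to $C(f)$ on a closed invariant set, and the product formula with Lemma 3.1 of \cite{L} gives $\ge l$.

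(iii) Your reduction shows only that $f^m|_{I_j}=\mathrm{Id}$ excludes $f$-wandering points, but what you use is the converse. The converse holds for the following reason. Let $p$ be the period of $I_j$ under the edge permutation, and suppose $f^p|_{I_j}\neq\mathrm{Id}$. Then a wandering interval of $f^p|_{I_j}$ that avoids $b$ is wandering for $f$, since its images under $f^s$ with $p\nmid s$ lie in other edges.
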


\noindent{\it Proof.} As in the proof of Theorem~\ref{thrm:star}, take $m\in\mathbb{N}$ such that $f^m:I_j\to I_j$. Let $I_1,\ldots,I_l$ be the edges with a wandering point and $I_{l+1},\ldots,I_k$ the edges without a wandering point. Then
$$f^m|_{I_j}=\mathrm{Id},\quad\mbox{for}\;j\in\{l+1,\ldots,k\}.$$ Denote by
$$A:=\bigcup_{j=1}^lI_j,\quad, B:=\bigcup_{j=l+1}^kI_j.$$
We have
$$h_{\mathrm{pol}}(C(f))=\max\{h_{\mathrm{pol}}(C(f)|_A),h_{\mathrm{pol}}(C(f)|_B)\}=h_{\mathrm{pol}}(C(f)|_A).$$
The proof now follows from Theorem~\ref{thrm:star}.\qed

Corollary~\ref{cor:graph1} provides a complete description of $h_{\mathrm{pol}}(C(f))$ for a homeomorphism $f$ of a graph. We restate it for the sake of convenience.

\begin{cor} Let $G$ be a graph and $f:G\to G$ a homeomorphism. Let $k$ be the maximal number of edges with a common branch point that possess a wandering point. Then
\begin{itemize}
\item $h_{\mathrm{pol}}(C(f))=k$ if $k\ge 2$;
\item $h_{\mathrm{pol}}(C(f))=2$ if $k\le 1$ and and there exists a closed path $C\subset X$ such that $f^n|_{C}$ is not conjugate to the circle rotation, for all $n$;
\item $h_{\mathrm{pol}}(C(f))=0$ otherwise.
\end{itemize}
\end{cor}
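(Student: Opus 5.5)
The plan is to pass to an iterate $f^m$ that fixes every vertex and maps every edge onto itself, and then split $C(G)$ into finitely many closed invariant pieces whose polynomial entropies can be read off from Theorem~\ref{thrm:star}, from the interval result of~\cite{DK}, and from the circle results of~\cite{L}. Since $G$ has finitely many vertices and edges and $f$ permutes both, the argument of Step I gives $m$ with $f^m(v)=v$ and $f^m(e)=e$ for all of them. We have $h_{\mathrm{pol}}(C(f))=h_{\mathrm{pol}}(C(f^m))$, and $f^n|_C$ fails to be conjugate to a rotation for every $n$ exactly when this holds for $f^{mn}$. So we may assume $f$ fixes all vertices and edges. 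After this, each edge either is pointwise fixed or contains a wandering point, by Step II. For the lower bounds:
\begin{itemize}
\item If $k\ge2$, fix a branch point $b$ with $k$ wandering edges. The stars centred at $b$ with endpoints on these edges form a closed invariant set, and Steps III--IV of Theorem~\ref{thrm:star} apply verbatim to it. This gives $h_{\mathrm{pol}}\ge k$.
\item If there is any wandering edge $e$, then $C(e)\subset C(G)$ and~\cite{DK} give $h_{\mathrm{pol}}\ge 2$.
\item If a closed path $C$ has $f^n|_C$ never conjugate to a rotation, then some edge of $C$ is not pointwise fixed, since otherwise $f|_C=\mathrm{Id}$. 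That edge carries a wandering point, so again $h_{\mathrm{pol}}\ge2$.
\end{itemize}

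For the upper bound, I would decompose $C(G)$ into finitely many closed invariant families and use~(\ref{eq:finite-union}). Every subcontinuum $K$ is determined by its combinatorial type: the set of edges it contains entirely, together with at most two partial arcs in each remaining edge, each attached at a vertex or lying in the interior of the edge. Because $f$ fixes all vertices and edges, the closure of each type is invariant. Parametrising a type by the free endpoints gives a Lipschitz surjection from a product of edges, or of edge subintervals, onto that family, as in Step V.

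The free endpoints that actually move lie in wandering edges, and each such coordinate contributes $h_{\mathrm{pol}}=1$ by Lemma~3.1 of~\cite{L}. Coordinates in pointwise fixed edges contribute $0$. The product formula then bounds the entropy of each family by the number of free endpoints lying in wandering edges. The key combinatorial claim is that at most $\max\{k,2\}$ of these matter. Endpoints in different edges at a common vertex are bounded by $k$. An interval with both endpoints inside one edge contributes $2$. The problematic case is a continuum with endpoints near several different branch points, for example a long arc whose two ends lie in wandering edges at distinct vertices, or a continuum with many ends.

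This is the main obstacle. Naively, endpoints at distinct branch points add up, which would give entropy larger than $k$. My first idea is to observe that when all vertices are fixed and the continuum contains whole edges between them, the endpoints behave independently. The count should therefore instead be governed by the structure of the separated sets: two endpoints at different vertices would each contribute $1$, giving at least $2$. That is consistent with the claimed formula only if the number of wandering edges at distinct vertices plays no role, and I expect the real argument to need finer estimates, with the Lipschitz factor bound replaced by a direct count of separated sets exploiting monotonicity of orbits on each edge.

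Concretely, I would try to show that the number of $(n,\varepsilon)$-separated continua grows like $n^{\max\{k,2\}}$ by a coding: each wandering endpoint's itinerary is determined by the single time it passes through a fundamental domain. If that count turns out to grow faster, I would recheck the statement itself. When no edge is wandering, $f=\mathrm{Id}$ and $h_{\mathrm{pol}}=0$, which completes the trichotomy.
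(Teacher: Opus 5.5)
\textbf{There is a genuine gap, and it cannot be filled.} The gap is the upper bound. Your ``key combinatorial claim'', that at most $\max\{k,2\}$ wandering endpoints matter, is false. Your naive count, in which free endpoints near different branch points add up, is the correct one, so no finer coding of separated sets will rescue the argument: the statement itself is false.

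Here is a counterexample to the first bullet. Let $G$ be the tree with two branch points $b_1,b_2$ of order $3$ joined by an edge $e_0$, with edges $e_1,e_2$ attached at $b_1$ and $e_3,e_4$ attached at $b_2$. Let $f=\mathrm{Id}$ on $e_0$. On each $e_i$, $1\le i\le 4$, let $f$ be a homeomorphism fixing only the two ends of $e_i$. Exactly two edges at each branch point carry wandering points, so $k=2$. However,
\[
\Phi(x_1,x_2,x_3,x_4):=[b_1,x_1]\cup[b_1,x_2]\cup e_0\cup[b_2,x_3]\cup[b_2,x_4],\qquad x_i\in e_i,
\]
is an embedding of $e_1\times e_2\times e_3\times e_4$ onto a closed $C(f)$-invariant subset of $C(G)$. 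It conjugates $f|_{e_1}\times\cdots\times f|_{e_4}$ to $C(f)$.

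Now take a non-fixed $y_i\in e_i$. By monotonicity of orbits, the points $f^{-j}(y_i)$, $0\le j<n$, are $(n,\varepsilon)$-separated for $\varepsilon\le d(y_i,f(y_i))$. Taking products of these sets gives $\mathrm{sep}(n,\varepsilon)\ge n^4$ for the product map, hence $h_{\mathrm{pol}}(C(f))\ge 4>k$. Chaining $r$ such branch points along pointwise fixed edges gives $h_{\mathrm{pol}}(C(f))\ge 2r$ while $k=2$, so the entropy is not even bounded in terms of $k$.

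The third bullet fails as well, and your own lower bound already shows it. Take $S_3$ with exactly one edge $e$ carrying a wandering point. Then $k=1$ and there is no closed path, yet $C(e)\subset C(G)$ gives $h_{\mathrm{pol}}(C(f))\ge 2$. The paper's own corollary for stars gives exactly $\max\{1,2\}=2$ here. Your last sentence quietly replaces the statement's ``otherwise'' by ``no edge is wandering'', which is a different case split.

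\textbf{The paper's proof has the same hole.} It writes $h_{\mathrm{pol}}(C(f^m))$ as a maximum over $C(f^m)|_{S_j}$ and $C(f^m)|_{C_i}$. This tacitly assumes every subcontinuum of $G$ lies in a single star or a single closed path, and so it misses continua through several branch points such as $\Phi(x)$ above. That is exactly the configuration you flagged as problematic. Your decomposition by combinatorial type is the honest one: it covers $C(G)$. Carried through, it bounds the entropy by the number of free endpoints lying in wandering edges, maximized over types, and the example shows this maximum is in general not $\max\{k,2\}$.

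\textbf{A smaller point about your reduction.} Passing to an iterate that fixes every vertex and edge requires the vertices to be the topologically distinguished points, namely branch points and endpoints. Vertices of order two need not be preserved by any iterate. When $G$ is a circle carrying a Denjoy homeomorphism, no iterate fixes any point, so your closed-path argument via Step~II and the interval result does not apply. That case would need the circle result of [DjK] directly.
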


\noindent{\it Proof.} Let $b_1,\ldots,b_r$ denote all branch points in $X$ and $C_1,\ldots,C_p$ denote all closed path contained in $X$. Since $f$ is a homeomorphism, every branch point is mapped to a branch point (of the same order) and $f(C_i)=C_j$ for some $j$. Therefore, there exists $m\in\mathbb{N}$ such that 
$$f^m:b_j\mapsto b_j,\quad f^m|_{C_j}:C_j\to C_j.$$ We have
$$h_{\mathrm{pol}}(C(f))=h_{\mathrm{pol}}(C(f^m))=\max_{i,j}\{h_{\mathrm{pol}}(C(f^m)|_{S_j}), h_{\mathrm{pol}}(C(f^m)|_{C_i})\}$$ where $S_j$ denotes the star with a branch point $b_j$. We now apply Theorem~\ref{thrm:star} and the result from~\cite{DjK}, which says that if a homeomorphism $f:\mathbb{S}^1\to\mathbb{S}^1$ of the circle is not conjugated to a rotation, then $h_{\mathrm{pol}}(C(f))=2$.  \qed

\section{Topological entropy of $C(f)$ on manifolds}

Let us recall the second main result announced in the Introduction.

\begin{thrm}
Let $f:M\to M$ be a reversible dynamical system on a connected compact topological manifold $M$ with the dimension at least two. If $f$ possesses a wandering point, then $h(C(f))=\infty$.
\end{thrm}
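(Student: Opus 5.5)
The plan is to imitate Step IV of the proof of Theorem~\ref{thrm:star}, but with the number of "independent directions" unbounded. Let $x$ be a wandering point with a wandering neighbourhood $U$; shrinking $U$, we may take $U$ to be a small closed coordinate ball around $x$. Then the sets $f^n(U)$, $n\in\mathbb{Z}$, are pairwise disjoint. Fix $k\in\mathbb{N}$ and choose $k$ distinct points $x^1,\ldots,x^k\in \mathrm{int}\,U$. Each $x^j$ is wandering, and the full orbits $\{f^n(x^j)\}$ are pairwise disjoint for distinct $j$ or $n$. We will show $h(C(f))\ge \log 2\cdot k$ (or at least $\ge c\log k$ with $c>0$). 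Since $k$ is arbitrary, this gives $h(C(f))=\infty$.

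The key construction uses $\dim M\ge 2$. For every subset $S\subseteq\{x^1,\ldots,x^k\}$, we want a continuum $K_S\subset U$ such that $K_S\cap\{x^1,\ldots,x^k\}=S$. In dimension $\ge 2$, $\mathrm{int}\,U\setminus\{x^1,\ldots,x^k\}$ is still path connected, so we can take $K_S$ to be a fixed "base" arc $\gamma$ in $U$ avoiding all $x^j$, together with short arcs from $\gamma$ to each point of $S$ that avoid the other $x^j$. More generally, for a finite window $n\in[-N,N]$ we want continua realizing arbitrary patterns. Consider
\[
Y:=\Big\{\,\bigcup_{n} f^{n}(K_{S_n})\cup\text{connectors}\Big\}.
\]
The issue is that a connected set meeting several disjoint $f^n(U)$ must also meet points in between. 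So instead we use connectors lying entirely outside $\bigcup_{|n|\le N} f^n(U)$. Such connectors exist because $M\setminus\bigcup_{|n|\le N}f^n(\mathrm{int}\,U')$, for slightly smaller balls $U'$, is connected when $\dim M\ge2$: removing finitely many disjoint open balls from a connected manifold of dimension $\ge 2$ leaves it connected. Thus for any choice of patterns $(S_n)_{|n|\le N}$ we obtain a continuum $K$ whose intersection with $\bigcup_{|n|\le N}\{f^n(x^j)\}$ is exactly $\{f^n(x^j): x^j\in S_n\}$.

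Next we code orbits. Define $\varphi:C(M)\to(\{0,1\}^k)^{\mathbb{Z}}$ by $\varphi(K)^j_n=1$ iff $f^n(x^j)\in K$. Exactly as in (\ref{eq:conjugacy}), $\varphi\circ C(f)=\sigma\circ\varphi$. Let $\Sigma:=\varphi(C(M))$, which is $\sigma$-invariant but possibly not closed. The construction above shows that every word of length $2N+1$ over the alphabet $\{0,1\}^k$ occurs in $\mathscr{L}^{2N+1}(\Sigma)$, so $p_\Sigma(m)=2^{km}$. Theorem~\ref{thm:complexity} then gives $h(\sigma;\Sigma)=k\log 2$. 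To transfer this to $C(f)$ despite the discontinuity of $\varphi$, we repeat verbatim the $\delta$-argument of Step IV. Given $\varepsilon$, choose $m$ and set $\delta$ to be the minimal distance between distinct points $f^r(x^i)$, $|r|\le m$, which is positive since the orbit points are distinct. Then an $(n,\varepsilon)$-separated set in $\Sigma$ lifts to an $(n,\delta)$-separated set in $C(M)$. For $\varphi(K)\ne\varphi(K')$ at a coordinate, one continuum contains a point $p=f^r(x^i)$ that the other misses; we need the Hausdorff distance to be at least some fixed amount. To guarantee this, we build the continua so that they stay $\delta$-away from all unselected orbit points in the window. This is possible because the connectors and base arcs can be chosen inside fixed compact sets avoiding $\delta$-neighbourhoods of all window points. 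With that, $h(C(f))\ge h(C(f);\varphi^{-1}\text{-lifts})\ge k\log2$.

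The main obstacle is the uniformity in the last step. The separation constant $\delta$ must depend only on $\varepsilon$ (through $m$), not on $n$, while the continua realize patterns over the whole time range $[0,n+m]$. I would handle this by fixing, once and for all, a closed connected "skeleton" $T\subset M$ disjoint from the closed $\delta$-neighbourhoods of all orbit points $f^r(x^j)$ with $|r|\le m$. For each window point there is a short spoke of length $<\delta/2$ hitting it, and for points outside the window only a disjointness requirement is imposed (not quantitative), which is harmless since they lie outside the window. Because separation is checked one time-shift at a time with the window $[-m,m]$, the images $f^l(K)$ must avoid the window points quantitatively for each $l$. This means the skeleton must avoid $\delta$-neighbourhoods of $f^{r}(x^j)$ for all $r\in[-m-n,m+n]$, so its $\delta$ might shrink with $n$. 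I would fix this by choosing $\delta$ from $m$ only, and taking $K=f^{-l}$-pullback-independent: the set $\bigcup_{|r|\le n+m}B[f^r(x^j),\eta_r]$ with radii $\eta_r$ chosen so that $f^{l}$ maps $B[f^{r}(x^j),\eta_r]$ into $B(f^{r+l}(x^j),\delta)$-compatible balls. By uniform continuity of $f^{\pm1}$ on the compact $M$ this is arranged inductively. Verifying this bookkeeping is where I expect the real work to be.
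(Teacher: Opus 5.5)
\textbf{There is a genuine gap in the transfer step, and the remedy you sketch controls the wrong inclusion.}

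For the lifted continua to be $(n,\delta)$-separated, you need the following. For each $l\in[0,n-1]$ and each window point $p=f^r(x^j)$ with $|r|\le m$ that is not selected at time $l$, you need $f^l(L)\cap B(p,\delta)=\emptyset$, i.e.\ $L\cap f^{-l}(B(p,\delta))=\emptyset$.

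Your fix does not give this. You choose radii $\eta$ with $f^l(B[f^{r-l}(x^j),\eta])\subseteq B(p,\delta)$ and keep $L$ (or a skeleton $T$) off these small balls, or off $\delta$-neighbourhoods of orbit points. But $f^{-l}(B(p,\delta))$ only contains such a small ball. It can itself be a large, badly distorted open set, since $\{f^{-l}\}_{l\ge0}$ need not be equicontinuous. Uniform continuity of $f^{\pm1}$ gives nothing uniform in $l\le n$.

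Your connectors are only required to avoid $\bigcup_{|t|\le N}f^t(\mathrm{int}\,U')$. So nothing prevents them from crossing these preimages, nor rules out a priori that the preimages separate the selected points. Hence $\mathrm{sep}(C(f),n,\delta)\ge\mathrm{sep}(\sigma,n,\varepsilon;\Sigma)$, and with it $h(C(f))\ge k\log 2$, is not established.

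The paper's route does not supply this step either. It obtains the bound for $2^f$ on $F_\omega$, i.e.\ finite sets of orbit points, where separation is automatic because such sets contain nothing else. It then transfers the bound to stars via $K\mapsto E(K)$, claimed to be $1$-Lipschitz. That map is not even continuous for $d_H$: adjoin to a star an arm running within $\epsilon$ of an existing arm and stopping near its midpoint. So the lifting problem you isolated is exactly the point that needs an argument.

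\textbf{The missing idea} is to let the wandering neighbourhood, not uniform continuity, do the bookkeeping. The cleanest way is an $f$-equivariant family of test continua.

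Take pairwise disjoint closed coordinate balls $\overline{B_1},\dots,\overline{B_k}\subset\mathrm{int}\,U$ with centres $c_i$. For $S\subseteq\mathbb{Z}\times\{1,\dots,k\}$ set $K_S:=M\setminus\bigcup_{(t,i)\in S}f^t(B_i)$. Then:
\begin{itemize}
\item The closed balls $f^t(\overline{B_i})$ are pairwise disjoint. Their frontiers $f^t(\partial B_i)$ lie in $K_S$ and are connected because $\dim M\ge 2$.
\item A separation $K_S=A\sqcup B$ would give a separation of $M$, by adding each removed ball to the side containing its boundary sphere. Closedness of the enlarged sides follows from the disjointness of the balls and the uniform local connectedness of $M$. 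Hence $K_S\in C(M)$.
\item $f(K_S)=K_{S+1}$, where $S+1$ shifts the first coordinate.
\item If $(0,i)\in S\setminus S'$, then $c_i\in K_{S'}$ while $K_S\cap B_i=\emptyset$. So $d_H(K_S,K_{S'})\ge\delta_0:=\min_i d(c_i,M\setminus B_i)>0$.
\end{itemize}
Thus the $2^{kn}$ continua $K_S$ with $S\subseteq\{-n+1,\dots,0\}\times\{1,\dots,k\}$ are $(n,\delta_0)$-separated: for a differing coordinate $(t,i)$, use $l=-t$. This gives $h(C(f))\ge k\log 2$ directly, without Theorem~\ref{thm:complexity}.

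\textbf{Repairing your connector construction instead.} The same principle works. Choose $\delta$, depending on $m$ only, with $B(f^r(x^j),\delta)\subseteq f^r(\mathrm{int}\,U')$ for $|r|\le m$. Then $f^{-l}(B(f^r(x^j),\delta))\subseteq f^{r-l}(G_j)$, where $G_j:=\bigcup_{|r|\le m}f^{-r}(B(f^r(x^j),\delta))$ is a fixed neighbourhood of $x^j$ in $\mathrm{int}\,U'$. Shrinking $\delta$ makes the sets $\overline{G_j}$ small and pairwise disjoint. It then suffices that:
\begin{itemize}
\item $L\cap f^t(U')$ is $f^t$ of one of finitely many model pieces in $U'$, each missing $\overline{G_j}$ for every unselected $j$;
\item all connectors lie outside $f^t(\mathrm{int}\,U')$ for every $t\in[-m-n+1,m]$.
\end{itemize}
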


\noindent{\it Proof.} Let $x_0$ be wandering and $U\ni x_0$ an open set such that $f^n(U)\cap f^m(U)=\emptyset$ for all integers $m$ and $n$. Fix a positive integer $k$ and choose $k$ different points $x^1,\ldots,x^k\in U$. %Since every sequence $f^n(x_j)$ has a convergent subequence (when $n\to\pm\infty$), we can assume that
%$$\lim_{n\to-\infty}f^n(x_j)=a_j,\quad\lim_{n\to+\infty}f^n(x_j)=b_j,\quad j\in\{1,\ldots,k\}.$$
Define the map
$$\varphi:2^M\to \left(\{0,1\}^k\right)^\mathbb{Z}$$ as
$$\varphi:K\mapsto\{v^j_n\},\; j\in\{1,\ldots,k\},\,n\in\mathbb{Z}$$
where
$$v^j_n:=\begin{cases}
0,&x^j_n\notin K\\
1,&x^j_n\in K,
\end{cases}$$
where $x^j_n:=f^n(x^j)$.
We again have $\varphi\circ 2^f=\sigma\circ\varphi$, where 
$\sigma$ is the shift map~(\ref{eq:shift-k}).

However, as mentioned in the proof of Theorem~\ref{thrm:star}, $\varphi$ is not continuous so we cannot use a standard factor argument.

Consider $F_\omega\subset 2^M$ and denote by $\Sigma:=\varphi(F_\omega)$. Let us prove that
$$
h(2^f;F_\omega)\ge h(\sigma;\Sigma)
$$
is still true. Similarly to the proof of Theorem~\ref{thrm:star}, fix $\varepsilon>0$ and choose $m\in\mathbb{N}$ such that for any two sequences $\{s_n\}, \{t_n\}\in\{0,1\}^\mathbb{Z}$,
$$d(\{s_n\},\{t_n\})<\varepsilon\;\Leftrightarrow\;s_r=t_r\;\mbox{for all}\;r\in[-m,m].$$ Define 
$$\delta:=\min\{d(x^i_r,x^j_l)\mid r,l\in[-m,m]\;,i,j\in\{1,\ldots,k\},\;(i,r)\neq (j,l)\}.$$ Since all $x^j$'s are wandering, we have $\delta>0$. Indeed, $x^i_r\neq x^i_l$ since $x^i$ is not periodic. Suppose that $x^i_r=x^j_l$ for some $i\neq j$. If $r=l$, then $x^i=x^j$, which implies $i=j$, a contradiction. If $r>l$, we have $f^s(x^i)=x^j$, for $s:=r-l>0$, which contradicts the fact that $U$ is a wandering neighbourhood.

Again, for $n\in\mathbb{N}$, let $E=\{\mathbf{a^1},\ldots,\mathbf{a}^N\}\subset\Sigma$ be a $(\sigma,n,\varepsilon)$-separated set in $\Sigma$. Define
$$K_j:=\{x^l_r\mid(\mathbf{a}^j)^l_r=1\}\in F_\omega.$$
By the same argument as in the proof of Theorem~\ref{thrm:star}, we obtain that the set $\{K_1,\ldots,K_N\}$ is $(2^f,n,\delta)$-separated. 

Therefore 
$$\begin{aligned}
&h(2^f;Y)=\lim_{\delta\to 0}\limsup_{n\to\infty}\frac{\log\mathrm{sep}(2^f,n,\delta;Y)}{n}\ge\\
&\limsup_{n\to\infty}\frac{\log\mathrm{sep}(2^f,n,\delta;Y)}{n}\ge
\limsup_{n\to\infty}\frac{\log\mathrm{sep}(\sigma,n,\varepsilon;\Sigma)}{n}.
\end{aligned}$$ Since this equality holds for every $\varepsilon\ge 0$, we conclude that
$$h(2^f;F_\omega)\ge h(\sigma;\Sigma).$$

Let us now compute $h(\sigma;\Sigma)$.
It is obvious that any word of the lenght $m$ can be obtained as a part of a sequence in $\Sigma$. Therefore $p_\Sigma(m)=(2^k)^m$ and
$$\lim_{m\to\infty}\frac{\log p_\Sigma(m)}{m}=\log 2^k,$$ hence, by Theorem~\ref{thm:complexity}, $h(\sigma;\Sigma)=2^k$. %Since $h(C(f))\ge \log 2^k$ for any $k$, we have $h(C(f))=\infty$.

It remains to estimate the entropy of $C(f)$.
Fix $k$ and let $Y_k\subset C(M)$ be the set of all $K$ that are homeomorphic image of a star with the order $k$. 
Define
$$Y:=\bigcup_{k\in N}Y_k\subset C(M).$$ The set $Y$ is $C(f)$-invariant but not closed subset of $2^M$. The same holds for $F_\omega$. Consider the following contiuous map
$$\pi:Y\to F_\omega,\quad \pi:K\mapsto\mathrm E(K).$$ This map is Lipschitz with a Lipschitz constant $1$. It is also surjective, since $M$ is a connected (hence path-connected) manifold (and therefore locally Euclidean). Hence we can apply Proposition~\ref{prop:lip} and conclude
$$h(C(f))\ge h(C(f),Y)\ge h(2^f;F_\omega)\ge 2^k.$$ Since the last inequality holds for all $k$, the claim follows. \qed

\section{Proof of Theorem~\ref{thm:complexity}}\label{sec:proof}

In order to prove Theorem~\ref{thm:complexity}, we first need to establish the notion of topological and polynomial entropy relative to $Y$ via open covers. %The construction and proofs are completely analogous for both types of entropy, so we only give detailed proofs for the topological entropy.

A family $\mathcal{U}$ of open subsets of $X$ is an {\it open cover} if $X=\bigcup_{U\in\mathcal{U}}U$. 
For an open cover $\mathcal{U}$ of $X$ and $Y\subseteq X$, we define
$$\mathcal{U}(Y):=\{U\in\mathcal{U}\mid U\cap Y\neq\emptyset\}.$$

We say that the open cover $\mathcal{V}$ is {\it finer} than the open cover $\mathcal{U}$ if for ever $V\in\mathcal{V}$ there exists $U\in\mathcal{U}$ with $V\subset U$, and we denote this by $\mathcal{V}\succeq\mathcal{U}$.
It is obvious that
$$\mathcal{V}\succeq\mathcal{U}\;\Rightarrow\mathcal{V}(Y)\succeq\mathcal{U}(Y).$$

We deal only with finite open covers (since $X$ is compact this is not a real restriction). The {\it diameter} of a finite open cover $\mathcal{U}$ is defined as:
$$\mathrm{diam}(\mathcal{U}):=\max\{\mathrm{diam}(U)\mid U\in\mathcal{U}\}.$$

Let $\mathcal{U}_1(Y),\ldots,\mathcal{U}_m(Y)$ be $m$ collections of open sets. Define their {\it join} by:
$$\bigvee_{j=1}^m\mathcal{U}_j:=\left\{\bigcap_{j=1}^mU_j\mid U_j\in\mathcal{U}_j\right\}.$$ 
If each $\mathcal{U}_j$ is a cover of $X$, then $\bigvee_{j=1}^m\mathcal{U}_j$ is also a cover of $X$. It is not hard to see that 
$$\left(\bigvee_{j=1}^m\mathcal{U}_j\right)(Y)\subseteq\bigvee_{j=1}^m\mathcal{U}_j(Y)$$ but that the equality does not have to hold.

If $f:X\to X$ is a continuous map, $\mathcal{U}$ a collection of open sets, and $m\ge 0$ is an integer, define:
$$f^{-m}\mathcal{U}:=\left\{f^{-m}(U)\mid U\in\mathcal{U}\right\}.$$ If $\mathcal{U}$ is a cover of $X$, then $f^{-m}\mathcal{U}$ is also a cover of $X$. If $Y\subseteq X$ is $f$-invariant (i.e., $f(Y)=Y)$) and $\mathcal{U}$ is a cover of $X$, then 
$$f^{-m}\left(\mathcal{U}(Y)\right)=(f^{-m}\mathcal{U})(Y).$$
For an open cover $\mathcal{U}$ of $X$, denote by
$$\mathcal{U}^n:=\bigvee_{j=0}^{n-1}f^{-j}(\mathcal{U}),\quad \mathcal{U}^n(Y):=\left(\bigvee_{j=0}^{n-1}f^{-j}(\mathcal{U})\right)(Y).$$ For a finite open cover $\mathcal{U}$ of $X$, we denote by $N(\mathcal{U}(Y))$ the minimal cardinality among all finite subcovers of $\mathcal{U}(Y)$ that still cover $Y$.

Finally, the {\it topological entropy of} $f$ {\it relative to the open cover} $\mathcal{U}$ and $Y$ is defined as:
$$h(f,\mathcal{U};Y):=\limsup_{n\to\infty}\frac{\log N\left(\mathcal{U}^n(Y)\right)}{n},$$ and, similarly, we define the {\it polynomial entropy of} $f$ {\it relative to the open cover} $\mathcal{U}$ and $Y$ as:
$$h_{\mathrm{pol}}(f,\mathcal{U};Y):=\limsup_{n\to\infty}\frac{\log N\left(\mathcal{U}^n(Y)\right)}{\log n}.$$

 It is easy to see that for
$$\begin{aligned}
\mathcal{V}\succeq\mathcal{U}\;&\Rightarrow\;N(\mathcal{V}(Y))\ge N(\mathcal{U}(Y))\;\Rightarrow\;
N\left(\mathcal{V}^n(Y)\right)
\ge N\left(\mathcal{U}^n(Y)\right)\\
&\Rightarrow h(f,\mathcal{V};Y)\ge h(f,\mathcal{U};Y).
\end{aligned}$$

Define 
$$h^*(f;Y)=\sup_\mathcal{U} h(f,\mathcal{U};Y),\quad h^*_{\mathrm{pol}}(f;Y)=\sup_\mathcal{U} h_{\mathrm{pol}}(f,\mathcal{U};Y).$$ 

\begin{prop}\label{prop:generating} Let $\mathcal{V}_n$ be a sequence of open covers such that $\lim\limits_{n\to\infty}\mathrm{diam}\left(\mathcal{V}_n\right)\to 0$ and $Y\subseteq X$ an $f$-invariant subset. Then the limit $\lim\limits_{n\to\infty}h\left(f,\mathcal{V}_n(Y)\right)$ exists and
$$h^*(f;Y)=\lim_{n\to\infty}h\left(f,\mathcal{V}_n(Y)\right).$$
The same holds for the polynomial entropy.
\end{prop}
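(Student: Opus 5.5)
The plan is to mirror the classical proof that the open-cover entropy is a limit along covers whose diameters shrink, keeping track of the subset $Y$. Write $a_n:=h(f,\mathcal{V}_n;Y)$; to lighten notation I identify $h(f,\mathcal{V}_n(Y))$ with $a_n$. The monotonicity already noted gives $a_n\le h^*(f;Y)$ for every $n$, so $\limsup_n a_n\le h^*(f;Y)$. The whole statement (existence of the limit together with the equality) therefore follows once I show
$$\liminf_{n\to\infty}a_n\ge h(f,\mathcal{U};Y)$$
for every finite open cover $\mathcal{U}$ of $X$. Taking the supremum over $\mathcal{U}$ then squeezes the sequence. The polynomial case is handled word for word, with $n$ replaced by $\log n$ in the denominators.

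Fix a finite open cover $\mathcal{U}$ and let $\lambda>0$ be a Lebesgue number for it. For $n$ so large that $\mathrm{diam}(\mathcal{V}_n)<\lambda$, every $V\in\mathcal{V}_n$ lies in some $U\in\mathcal{U}$, so $\mathcal{V}_n\succeq\mathcal{U}$.

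The point to verify is that refinement survives both the iterated joins and the restriction to $Y$. Take an element
$$W=\bigcap_{j=0}^{m-1}f^{-j}V_j\in\mathcal{V}_n^m(Y),$$
so that $W\cap Y\neq\emptyset$. Choose $U_j\supseteq V_j$ with $U_j\in\mathcal{U}$. Then
$$W':=\bigcap_{j=0}^{m-1} f^{-j}U_j$$
contains $W$, hence also meets $Y$, and so $W'\in\mathcal{U}^m(Y)$. Now suppose a subfamily of $\mathcal{V}_n^m(Y)$ covers $Y$. Replacing each of its members by such a $W'$ produces a subfamily of $\mathcal{U}^m(Y)$ that still covers $Y$ and has no more elements. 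This gives
$$N(\mathcal{U}^m(Y))\le N(\mathcal{V}_n^m(Y))$$
for all $m$. Dividing by $m$ (respectively by $\log m$) and taking $\limsup_m$ yields $a_n\ge h(f,\mathcal{U};Y)$ for all large $n$. This is exactly the inequality needed.

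The main obstacle I expect is this restriction-to-$Y$ bookkeeping. The join of the restricted covers $\bigvee\mathcal{U}_j(Y)$ can be strictly larger than $(\bigvee\mathcal{U}_j)(Y)$, so one must take care to always work with the restricted join $\mathcal{U}^m(Y)$ as defined. The argument above handles this because the refining set $W'$ automatically meets $Y$; the invariance of $Y$ is not even needed for this step. Finiteness of $N(\cdot)$ is guaranteed because the covers are finite. No compactness of $Y$ is used, only compactness of $X$, which supplies the Lebesgue number.
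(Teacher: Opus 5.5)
Your proof is correct and follows the paper's argument: a Lebesgue number of $\mathcal{U}$ forces $\mathcal{V}_n\succeq\mathcal{U}$ for large $n$, which gives $\liminf_n h(f,\mathcal{V}_n;Y)\ge h(f,\mathcal{U};Y)$, while $\limsup_n h(f,\mathcal{V}_n;Y)\le h^*(f;Y)$ holds by the definition of $h^*$ as a supremum (not by monotonicity, as you wrote). Your explicit check that $N(\mathcal{U}^m(Y))\le N(\mathcal{V}_n^m(Y))$, obtained by enlarging each $\bigcap_j f^{-j}V_j$ to a member of $\mathcal{U}^m(Y)$, fills in the monotonicity step that the paper only asserts as easy to see.
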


\noindent{\it Proof.} Fix an open cover $\mathcal{U}$ and let $\delta>0$ be its Lebesgue covering number. Choose $n\ge n_0$ such that $\mathrm{diam}\left(\mathcal{V}_n\right)<\delta$, for $n\ge n_0$. Since $\mathcal{V}_n\succeq\mathcal{U}$ we have $\mathcal{V}_n(Y)\succeq\mathcal{U}(Y)$ and therefore $h\left(f,\mathcal{V}_n;Y\right)\ge h(f,\mathcal{U};Y)$. Thus $\liminf\limits_{n\to\infty}h\left(f,\mathcal{V}_n;Y\right)\ge h(f,\mathcal{U};Y)$, and, since $\mathcal{U}$ was arbitrary, we conclude
$$\liminf_{n\to\infty}h\left(f,\mathcal{V}_n\;Y\right)\ge \sup_{\mathcal{U}}\{h(f,\mathcal{U};Y)\}\ge\limsup_{n\to\infty}h\left(f,\mathcal{V}_n;Y\right).$$

\qed 

\begin{thrm} For $Y\subset X$ such that $f(Y)=Y$, we have $h^*(f;Y)=h(f;Y)$ and $h*_{\mathrm{pol}}(f;Y)=h_{\mathrm{pol}}(f;Y)$.
\end{thrm}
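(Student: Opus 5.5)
We compare the two definitions directly, following the classical Bowen argument that relates separated sets to open covers, and we keep track of where the points and the cover elements live relative to $Y$. The proof has two inequalities. Both are proved at the level of fixed $n$ and fixed scale, and then passed through $\log(\cdot)/n$ (respectively $\log(\cdot)/\log n$), $\limsup_n$, and the limit in the scale. Since only the counting inequalities are used, the same argument serves for both $h$ and $h_{\mathrm{pol}}$.

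\emph{Step 1: $h(f;Y)\le h^*(f;Y)$.} Fix $\varepsilon>0$ and a finite open cover $\mathcal{U}$ of $X$ with $\mathrm{diam}(\mathcal{U})<\varepsilon$. Let $E\subset Y$ be an $(f,n,\varepsilon)$-separated set, and let $\mathcal{W}\subseteq\mathcal{U}^n(Y)$ be a subcover of $Y$ of minimal cardinality $N(\mathcal{U}^n(Y))$. Each point of $E$ lies in some $W\in\mathcal{W}$. Each $W=\bigcap_{j}f^{-j}(U_j)$ has $d^f_n$-diameter less than $\varepsilon$, so it contains at most one point of $E$. Hence $\mathrm{sep}(n,\varepsilon;Y)\le N(\mathcal{U}^n(Y))$, which gives $\limsup_n \frac1n\log\mathrm{sep}(n,\varepsilon;Y)\le h(f,\mathcal{U};Y)\le h^*(f;Y)$. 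Letting $\varepsilon\to0$ proves the inequality.

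\emph{Step 2: $h^*(f;Y)\le h(f;Y)$.} Fix a finite open cover $\mathcal{U}$ with Lebesgue number $\delta$, and take $\varepsilon<\delta/2$. Let $E\subset Y$ be a maximal $(f,n,\varepsilon)$-separated subset of $Y$. By maximality, the $d^f_n$-balls $B_n(x,\varepsilon)$ with $x\in E$ cover $Y$. For $0\le j<n$, the set $f^j(B_n(x,\varepsilon))$ lies in $B(f^j x,\varepsilon)$, whose diameter is less than $\delta$, so it lies in some $U_j\in\mathcal{U}$. Therefore $B_n(x,\varepsilon)\subset\bigcap_j f^{-j}(U_j)=:W_x$. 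Moreover $W_x$ contains $x\in Y$, so $W_x\in\mathcal{U}^n(Y)$. This is the point where relativity to $Y$ needs care: we must produce an element of $\mathcal{U}^n(Y)$ itself, not merely of $\bigvee_j f^{-j}(\mathcal{U}(Y))$, and the fact that the center $x$ lies in $Y$ is exactly what ensures this. The sets $\{W_x\}_{x\in E}$ cover $Y$, so $N(\mathcal{U}^n(Y))\le\mathrm{sep}(n,\varepsilon;Y)$. Taking $\log$, dividing, and letting $n\to\infty$ gives $h(f,\mathcal{U};Y)\le\limsup_n\frac1n\log\mathrm{sep}(n,\varepsilon;Y)$. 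The right-hand side is nonincreasing in $\varepsilon$, so it is bounded by its limit $h(f;Y)$. Taking the supremum over $\mathcal{U}$ completes the step.

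\emph{Main obstacle.} I expect the only delicate point to be Step 2, namely verifying that the covering sets $W_x$ genuinely belong to $\mathcal{U}^n(Y)$ and cover $Y$ even though $Y$ is not closed. The maximality of $E$ is taken inside $Y$, so the balls cover $Y$; they need not cover $\overline{Y}$, but that is not required. The invariance $f(Y)=Y$ is not even needed for these two inequalities. It matters only for the consistency identities used elsewhere, such as $f^{-m}(\mathcal{U}(Y))=(f^{-m}\mathcal{U})(Y)$ and Proposition~\ref{prop:generating}. The polynomial versions follow verbatim by replacing $n$ with $\log n$ in the denominators.
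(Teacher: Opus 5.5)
Your proof is correct and rests on the same two Bowen-type counting inequalities as the paper's. Your Step~1 is the paper's estimate $\mathrm{sep}(n,\varepsilon;Y)\le N(\mathcal{V}^n(Y))$ for covers of small diameter. Your Step~2 is the paper's chain $N(\mathcal{U}^n(Y))\le\mathrm{span}(n,\delta/2;Y)\le\mathrm{sep}(n,\delta/2;Y)$, with your maximal separated set playing the role of the spanning set.

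The only difference is how you pass to the limit. You compare each cover directly with $h(f;Y)$ and take the supremum, whereas the paper sandwiches $\mathrm{sep}$ between two specific families of ball covers and invokes Proposition~\ref{prop:generating}. As a result your version needs neither that proposition nor the invariance of $Y$. It also avoids the paper's tacit (and in general false) assumption that an arbitrary cover by balls of radius $2\varepsilon$ has Lebesgue number $2\varepsilon$.
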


\noindent{\it Proof.} We have already defined $\mathrm{sep}(n,\varepsilon;Y)$ on page~\pageref{def:sep}. A subset $F\subset X$ is said to be an {\it $(n,\varepsilon)$-spanning set} for $Y$ (with respect to $f$) if for every $y\in Y$ there exists $x\in F$ such that $d_n^f(x,y)\le\varepsilon$, or, equivalently:
$$Y\subseteq\bigcup_{x\in F}\bigcap_{j=0}^{n-1}f^j\left(B[f^j(x),\varepsilon]\right).$$ Denote by $\mathrm{span}(n,\varepsilon;Y)$ the minimal cardinality of $(f,n,\varepsilon)$-spanning set for $Y$. Since every $(n,\varepsilon)$-separated subset of $Y$ of maximal cardinality is also $(n,\varepsilon)$-spanning set for $Y$, we have 
\begin{equation}\label{eq:span<sep}
\mathrm{span}(n,\varepsilon;Y)\le\mathrm{sep}(n,\varepsilon;Y).
\end{equation}

We now prove the following two inequalities:
\begin{itemize}
\item If $\mathcal{U}$ is an open cover of $X$ with Lebesgue number $\delta$, then
\begin{equation}\label{eq:<1}
N(\mathcal{U}^n(Y))\le\mathrm{span}(n,\delta/2;Y).
\end{equation}

\item If the diameter of an open cover $\mathcal{V}$ is less than or equal to $\varepsilon$, then
\begin{equation}\label{eq:<2}
\mathrm{sep}(n,\varepsilon;Y)\le N(\mathcal{V}^n(Y)).
\end{equation}
\end{itemize}
Let $F$ be an $(n,\delta/2)$-spanning set for Y of cardinality $\mathrm{span}(n,\delta/2,Y)$. This means that
$$Y\subseteq\bigcup_{x\in F}\bigcap_{j=0}^{n-1}f^{-j}(B[f^j(x),\delta/2]).$$ By the choice of $\delta$, we see that
for every $j$, there exists $U_j\in\mathcal{U}$ such that
$$B[f^j(x),\delta/2]\subset U_j.$$ This implies that for every $x\in F$,
$$\begin{aligned}
&B[(x),\delta/2]\cap f^{-1}(B[f(x),\delta/2])\cap\ldots\cap f^{-(n-1)}(B[f^{n-1}(x),\delta/2])\subseteq\\
&U_0\cap f^{-1}(U_1)\cap\ldots\cap f^{-(n-1)}(U_{n-1})
\end{aligned}$$ for some $U_0,U_1,\ldots,U_{n-1}\in\mathcal{U}$. Thus for every $x\in F$, we have a set $U_x\in\mathcal{U}^n$, and 
$$Y\subseteq \bigcup_{x\in F} U_x.$$ Hence the minimal cardinality of subcovers of $\mathcal{U}^n(Y)$ is at most the cardinality of $F$, and~(\ref{eq:<1}) is proved.

Regarding the inequality~(\ref{eq:<2}), suppose that $E\subset Y$ is an $(n,\varepsilon)$ separated set of maximal cardinality $(n,\varepsilon;Y)$. If $x,y\in E$, then $x$ and $y$ cannot belong to the same element of $\mathcal{V}^n(Y)$. Indeed, if $x,y\in U$, $U\in\mathcal{V}^n(Y)$, 
we have
$$x,y\in U=U_0\cap f^{-1}(U_1)\cap\ldots\cap f^{-(n-1)}(U_{n-1}),$$ so $f^j(x), f^j(y)\in U_j$, and since
 the diameter of $U_j$ is less than $\varepsilon$, we would have $d_n^f(x,y)<\varepsilon$, which is a contradiction. Hence, no two points of $E$ can belong to the same element of $\mathcal{V}^n(Y)$, so the minimal number of elements of $\mathcal{V}^n(Y)$ needed to cover $Y$ is at least $\mathrm{sep}(n,\varepsilon; Y)$, and~(\ref{eq:<2}) is proved.

We now complete the proof of the theorem. Let $\varepsilon>0$. Let $\mathcal{U}_\varepsilon$ be any open cover of $X$ consisting of open balls of radius $2\varepsilon$ and $\mathcal{V}_\varepsilon$ be any cover of $X$ consisting of open balls of radius $\varepsilon/2$. From~(\ref{eq:span<sep}),~(\ref{eq:<1}) and~(\ref{eq:<2}), we have:
$$N(\mathcal{U}^n_\varepsilon(Y))\le\mathrm{span}(n,\varepsilon;Y)\le\mathrm{sep}(n,\varepsilon;Y)
\le N(\mathcal{V}^n_\varepsilon(Y)).$$
If we take $\varepsilon=1/n$, then by Proposition~\ref{prop:generating} the outer limits exists and are equal to $h^*(f;Y)$ Hence the inner ones exist too, and $h(f;Y)=h^*(f;Y)$. \qed

%N\left(\mathcal{U}^n(Y)\right)

\vspace{5mm}
\noindent{\bf Proof of Theorem~\ref{thm:complexity}.} The proof is now standard. 
Let $j_i\in\mathbb{Z}$ (for $i\in\{1,\ldots,m\}$) be $m$ distinct integers and let $a_{j_i}\in A$ be $m$ (possibly equal) symbols. Denote by $[a_{j_1}\cdots a_{j_m}]$ the cylinder:
$$[a_{j_1}\cdots a_{j_m}]:=\left\{x\in\Sigma\mid x_{j_i}=a_{j_i},\;i\in\{1,\ldots,m\} \right\}.$$ Each cylinder is both open and closed set in $\Sigma$. For a fixed $n\ge 0$, the family:
$$\mathcal{V}_n:=\left\{[a_{-n}a_{-n+1}\cdots a_0\cdots a_n]\mid a_j\in A,\;j\in\{-n,\ldots,n\} \right\}$$ forms an open cover of $A^\mathbb{Z}$ with diameter smaller than $2^{-n}$. From Proposition~\ref{prop:generating} it follows that
$$h(\sigma;\Sigma)=\lim_{n\to\infty}h(f,\mathcal{V}_n;\Sigma).$$
 Since $\sigma^{-k}([a_{-n}\cdots a_{n}])=[b_{k-n}\cdots b_{k+n}]$, with $b_l=a_{l-k}$, we conclude
$$\bigvee_{j=0}^{l-1}\sigma^{-j}(\mathcal{V}_n)=\left\{[a_{-n}\cdots a_nb_{n+1}\cdots b_{n+l}]\mid a_j, b_i\in A \right\}.$$
We see that 
$$N\left(\bigvee_{j=0}^{l-1}\sigma^{-j}(\mathcal{V}_n(\Sigma))\right)=p_\Sigma(2n+1+l),$$ so we have
$$h(\sigma,\mathcal{V}_n;\Sigma)=\limsup_{l\to\infty}\frac{\log p_\Sigma(2n+1+l)}{l}=\limsup_{l\to\infty}\frac{\log p_\Sigma(l)}{l}.$$
The proof for the polynomial entropy is the same.\qed

\section{Proof of Theorem~\ref{thm:question}}

We conclude this paper by answering a question posed in~\cite{AB}. Recall that $C_n(X)$ is the closed subset of $2^X$ consisting of all $K\in 2^X$ such that $K$ has at most $n$ connected components.

The authors of~\cite{AB} asked the following question: does there exist a dynamical system $(X,f)$ such that
$$h(C(f))<h(C_n(F))<h(2^f)$$ for some $n\ge 2$? The answer is affirmative. Indeed, consider any interval map $f:[0,1]\to [0,1]$ with $h(f)=\alpha>0$. If we denote $X:=[0,1]$, then we have the following uniformly finite-to-one continuous surjections:

$$\pi:C(X)\to F_2(X),\quad \pi_n: C_n(X)\to F_{2n}(X),\quad K\mapsto \partial(K)$$ and the corresponding restriction of $2^f$ are semi-conjugated, more precisely, $F_2(f)$ is a factor of $C(f)$ and $F_{2n}(X)$ is a factor of $C_n(f)$. In the case $n=1$, the map $\pi$ is in fact a homeomorphism.

Moreover, there are natural projections $\tau_n :X^n\to F_n(X)$ which are uniformly finite-to-one continuous semiconjugacies. It is know that in this case, the topological entropy is preserved (see Theorem 17 in~\cite{Bo}). Therefore,
$$h(C_n(f))=h(F_n(f))=h(f^{\times n})=n\alpha.$$ Consequently,
$$h(f)<f(C(f))<h(C_2(f))<\ldots<h(C_n(f))<h(C_{n+1}(f))<\ldots<h(2^f)=\infty.$$

One may ask the same question for polynomial entropy. However, the previous argument does not apply in this setting, as polynomial entropy is not preserved under uniformly finite-to-one semiconjugacies.

However, we can use (almost) the same example to construct the map satisfying
$$h_{\mathrm{pol}}(C(f))<h_{\mathrm{pol}}(C_n(f))<h_{\mathrm{pol}}(2^f).$$
Let $X:=[0,1]$ and $f:X\to X$ be a homeomorphism with a wandering point (for example $f(x)=x^2$). 
It is known (see~\cite{L}) that $h_{\mathrm{pol}}(f)=1$. We claim that $h_{\mathrm{pol}}(C_n(f))=2n$. It is shown in~\cite{DK} that 
$h_{\mathrm{pol}}(C(f))=2$. To illustrate the general case, we show that $h_{\mathrm{pol}}(C_2(f))=4$. We divide the set $C_2(X)$ 
into four parts and establish the conjugation between every part with some space $F_k(X)$. Then we use that 
\begin{equation}\label{eq:F_k}
h_{\mathrm{pol}}(F_k(f))=k
\end{equation}
(see~\cite{DKL}).
Denote by 
$$\begin{aligned}
&A:=\{K\in C_2(X)\mid K=[a,b]\cup[c,d]\;\mbox{where}\;a<b<c<d\},\\
 &B_1:=\{K\in C_2(X)\mid K=[a,b]\cup\{c\},\;\mbox{where}\;a<b<c\}\\
 &B_2:=\{K\in C_2(X)\mid K=\{a\}\cup[b,c],\;\mbox{where}\;a<b<c\}\\
 &C:=\{K\in C_2(X)\mid K=[a,b],\;\mbox{where}\;a\le b\}.
   \end{aligned}
 $$
 We observe that $C_2(X)=A\sqcup B_1\sqcup B_2\sqcup C$ and that the subsets $A, B_1, B_2$ and $C$ are $C_f(f)$-invariant (but not closed, except for $C$), since $f$ is a homeomorphism. Therefore
 $$h_{\mathrm{pol}}(C_2(f))=\max\{h_{\mathrm{pol}}(C_2(f);A), h_{\mathrm{pol}}(C_2(f);B_1), h_{\mathrm{pol}}(C_2(f);{B_2}), h_{\mathrm{pol}}(C_2(f);C)\}.$$
 Consider the map
 $$\pi:C_2(X)\to F_4(X),\quad, \pi:K\mapsto \partial(K).$$ 
It is clear that $\pi$ is a continuous surjection satisfying $\pi\circ C_2(f)=F_4(f)\circ\pi$. 
Next, define the following partition of $F_4(X)$:
 $$F_4(X)=D\sqcup E\sqcup F$$ as
 $$D:=\{K\in F_4(X)\mid |K|=4\},\quad E:=\{K\in F_4(X)\mid |K|=3\},\quad F=\{K\in F_4(X)\mid |K|\le 2\}.$$
 Note that all the subset are $F_4(f)$-invariant (but not closed) and $E\cup F=F_3(X)$.
 We observe that
 $$\pi|_{A}:A\to D,\quad\pi|_{B_1}:B_1\to E,\quad \pi|_{B_2}:B_2\to E,\quad \pi|_{C}:C\to F$$ are homeomorphisms. Moreover, they are isometries, and hence bi-Lipschitz. Therefore, by Proposition~\ref{prop:lip}
 $$\begin{aligned}
& h_{\mathrm{pol}}(C_2(f);A)=h_{\mathrm{pol}}(F_4(f);D),\\
&h_{\mathrm{pol}}(C_2(f);{B_1})=h_{\mathrm{pol}}(C_2(f);{B_2})=h_{\mathrm{pol}}(F_4(f);E),\\ 
&h_{\mathrm{pol}}(C_2(f);C)=h_{\mathrm{pol}}(F_4(f);F)=2,
\end{aligned}$$ so
$$h_{\mathrm{pol}}(C_2(f))=\max\{h_{\mathrm{pol}}(F_4(f);D),h_{\mathrm{pol}}(F_4(f);E),h_{\mathrm{pol}}(F_4(f);F)\}.$$
We will show that the later is equal to $h_{\mathrm{pol}}(F_4(f);D)=4$. 
First, we note that $h_{\mathrm{pol}}(C_2(f))\le 4$, since $C_2(f)$ is a factor of $F_4(f)$. Therefore, it suffices to prove that $h_{\mathrm{pol}}(F_4(f);D)=4$.
We have
$$\begin{aligned}
4&=h_{\mathrm{pol}}(F_4(f))=\max\{h_{\mathrm{pol}}(F_4(f);D),h_{\mathrm{pol}}(F_4(f);{E\cup F})\}\\
&=\max\{h_{\mathrm{pol}}(F_4(f);D),h_{\mathrm{pol}}(F_4(f);{F_3(X)})\}=\max\{h_{\mathrm{pol}}(F_4(f);D),h_{\mathrm{pol}}(F_3(f))\}\\
&=\max\{h_{\mathrm{pol}}(F_4(f);D),3\},\end{aligned}$$ (the last equality follows from~(\ref{eq:F_k})) so $h_{\mathrm{pol}}(F_4(f);D)=4$.\qed

 \end{document}